\documentclass[reqno]{amsart}
\usepackage{amsmath, amsthm, amssymb,  enumitem}

\usepackage[nobysame, non-sorted-cites, alphabetic]{amsrefs}
\usepackage{stmaryrd} 
\usepackage{hyperref}
\usepackage{verbatim} 
\usepackage{tikz}
    \usetikzlibrary{cd, graphs, decorations.pathmorphing, decorations.markings}
\usepackage{color}
\setenumerate{label=\textnormal{(\arabic*)}} 

\newcommand{\cat}[1]{\ensuremath{\mathsf{#1}}}

\newcommand{\C}{\mathcal{C}}
\newcommand{\catC}{\mathcal{C}}

\newcommand{\cC}{\ensuremath{\mathcal{C}}}

\DeclareMathOperator{\Ab}{\cat{Ab}}

\DeclareMathOperator{\Set}{\cat{Set}}

\DeclareMathOperator{\Grp}{\cat{Grp}}

\DeclareMathOperator{\HMag}{\cat{HMag}}

\DeclareMathOperator{\Msc}{\cat{Msc}}
\DeclareMathOperator{\cMsc}{\cat{cMsc}}
\DeclareMathOperator{\cSMsc}{\cat{cSMsc}}
\DeclareMathOperator{\SMsc}{\cat{SMsc}}
\DeclareMathOperator{\sMsc}{\cat{(S)Msc}}
\DeclareMathOperator{\Can}{\cat{Can}}
\DeclareMathOperator{\HGrp}{\cat{HGrp}}

\DeclareMathOperator{\End}{End}

\newcommand{\Z}{\mathbb{Z}}
\newcommand{\Q}{\mathbb{Q}}

\newcommand{\K}{\mathbf{K}}

\renewcommand{\P}{\mathcal{P}}

\newcommand{\ostar}{\circledast}

\newcommand{\modmod}{/\!/}

\newcommand{\separate}{\bigskip}

\numberwithin{equation}{section}

\theoremstyle{plain}
\newtheorem{theorem}[equation]{Theorem}
\newtheorem{corollary}[equation]{Corollary}
\newtheorem{lemma}[equation]{Lemma}
\newtheorem{proposition}[equation]{Proposition}
\newtheorem{question}[equation]{Question}

\theoremstyle{definition}
\newtheorem{definition}[equation]{Definition}

\newtheorem{example}[equation]{Example}
 
\newtheorem{remark}[equation]{Remark}

\begin{document}

\title[Quotient hyperstructures]{Quotients of mosaics and related hyperstructures}
\author{Siddhant Jajodia}
\email{jajodias@uci.edu}
\author{So Nakamura}
\email{snakamu2@uci.edu}
\author{Manuel Reyes}
\email{mreyes57@uci.edu}
\address{Department of Mathematics\\ University of California, Irvine \\
340 Rowland Hall\\ Irvine, CA 96267--3875 \\ USA}

\date{July 23, 2026}
\thanks{This work was supported by NSF grant DMS-2201273}
\keywords{Hypermagma, hypergroup, mosaic, quotient object, regular epimorphism, effective congruence, proto-exact category, parabelian category}
\subjclass[2010]{
Primary:
18A32, 
18E08, 
20N20. 
}

\begin{abstract}
This is a thorough study of quotients of hyperstructures that generalize hypergroups, namely mosaics and semimosaics. The quotients in these categories generalize those studied previously in the literature on hypergroups. We describe the effective congruences in these categories by characterizing them in terms of their underlying equivalence relation. This characterization is applied to provide new methods of constructing quotient objects modulo the action of endomorphisms, as well as to study explicit quotient mosaics of some small groups. 
We also show that the category of mosaics has a natural proto-exact structure.
\end{abstract}

\maketitle

\setcounter{tocdepth}{1} 
\makeatletter
\def\l@subsection{\@tocline{2}{0pt}{2.5pc}{5pc}{}}
\def\l@subsubsection{\@tocline{2}{0pt}{5pc}{7.5pc}{}} 
\makeatother

\section{Introduction}
\label{sec:intro}

For algebraic structures such as groups and modules, quotient structures are easy to describe: the kernel of every morphism is a subobject, and the quotient by this subobject is sufficient to describe all possible quotient structures.
The situation for rings is similar, although it is slightly complicated by the fact that ideals are not subobjects in the category of unital rings. However, when one studies monoids and other structures in universal algebra, the full complexity of quotient structures becomes evident. 
Rather than quotients by subobjects or ideals, we must allow for quotients by certain equivalence relations on the structures.

In this paper, we are interested in studying quotient objects of certain \emph{hyperstructures}, which lie outside the traditional bounds of universal algebra. These are sets $M$ equipped with binary operations of the form $\star \colon M \times M \to \P(M)$, so that products are \emph{sets} of elements in $M$. Two well-known types of hyperstructures are hypergroups~\cite{Marty} and hyperrings~\cite{Krasner}. Quotients of these structures have been studied quite extensively, beginning with the work of Marty~\cite{Marty:quotients}, and in many subsequent contexts, such as~\cite{Krasner:quotient, Jantosciak, FZ}.
However, only recently was a careful study of the categories of these objects undertaken, showing that the category of hypergroups fails to allow many types of universal constructions, including coequalizers.
Recently, two of the authors introduced in~\cite{NakamuraReyes} a generalization of hypergroups called~\emph{mosaics} whose category is quite well behaved, being both complete and cocomplete. The same is true for the corresponding structures that do not necessarily have inverse elements, which we call \emph{semimosaics}. (Definitions of these objects are reviewed in Section~\ref{sec:hyperstructures}.)

In this paper we take up the problem of understanding quotients of mosaics, which necessarily includes quotients of hypergroups that are more general than those previously considered in the literature.
We treat this problem from the perspective of congruences in regular categories, which we now recall. (We refer readers  to~\cite{Grillet:regular}, \cite[Chapter~2]{Borceux:handbook2}, \cite[Appendix~A5]{BorceuxBorn}, or~\cite{BournGran} for further details.)

\subsection{Reminder on quotients in regular categories}\label{subsection:regularcat}

Let $\C$ be a category with finite limits.
A \emph{relation}~\cite[Section~2.5]{Borceux:handbook2} on an object $M \in \C$ is a subobject of the product 
\begin{equation}\label{eq:relation}
r = (r_1, r_2) \colon R \hookrightarrow M \times M. 
\end{equation}
The relation is \emph{reflexive} if the diagonal $M \to M \times M$ factors through $r$, \emph{symmetric} if the ``swap'' automorphism $\sigma \colon M \times M \to M \times M$ satisfies $\sigma \circ r = r$, and \emph{transitive} if the pullback
\[
\begin{tikzcd}
R \times_M R \ar[r, dashrightarrow, "\rho_2"] \ar[d, dashrightarrow, "\rho_1"] & R \ar[d, "r_1"] \\
R \ar[r, "r_2"] & M
\end{tikzcd}
\]
is such that $(r_1 \circ \rho_1, r_2 \circ \rho_2) \colon R \times_M R \to M \times M$ factors through $r$.
Then $R$ is a \emph{congruence} 
on $M$ if it is reflexive, symmetric, and transitive. 
(An alternative characterization can be given as follows. If $R$ is a relation then for every object $X \in \C$ we obtain a relation $R_X$ on the set $\C(X,M)$ as the image of 
\[
\C(X,R) \hookrightarrow \C(X,M \times M) \cong \C(X,M) \times \C(X,M).
\]
Then, $R$ is a congruence if and only if each $R_X$ is an equivalence relation.)

Every morphism gives rise to a congruence in the following way.
If $f \colon M \to N$ is a morphism in $\C$, its \emph{kernel pair} $R_f$ is the pullback of $f$ along itself
\begin{equation}\label{eq:kernel pair}
\begin{tikzcd}
& &[-25pt] M \ar[dr, "f"] & \\[-1em]
R_f \ar[urr] \ar[drr] \ar[r, dashrightarrow] & M \times M \ar[ur] \ar[dr] \ar[rr, "f \times f"]  & & N. \\[-1em]
& & M \ar[ur, swap, "f"] & 
\end{tikzcd}
\end{equation}
The kernel pair $R_f$ can be viewed as a subobject of $M \times M$ and thus a relation on $M$. In fact, $R_f$ is a congruence, as shown in~\cite[Example~A.2.6]{BorceuxBorn}.
A congruence $R$ is called an \emph{effective congruence} if it is the kernel pair of some morphism in $\C$.

Given a congruence of the form~\eqref{eq:relation}, if the coequalizer 
\[
\begin{tikzcd}
R \ar[r, shift left, "r_1"] \ar[r, shift right, swap, "r_2"] & M \ar[r, "q"] & M/R,
\end{tikzcd}
\]
exists, then it is called the \emph{quotient} of $M$ by $R$.
In this situation, note that the relation $r = (r_1,r_2)$ factors uniquely through the kernel pair $\overline{R} := R_q$ as $R \hookrightarrow \overline{R} \hookrightarrow M \times M$, and their quotients coincide: $M/R = M/\overline{R}$.
As shown in the proof of~\cite[Proposition~A.5.12]{BorceuxBorn},
if $R$ is an effective congruence and its quotient $q \colon M \to M/R$ exists, then in fact $R$ is the kernel pair of its quotient: $R = \overline{R}$.

Recall that a \emph{regular epimorphism} in a category is a coequalizer of a parallel pair of morphisms. 
As explained in~\cite[Proposition~2.5.7]{Borceux:handbook1}, a morphism $f$ in $\C$ is a regular epimorphism (i.e., coequalizer) if and only if it is the coequalizer of its kernel pair $R_f \rightrightarrows M$. 
Thus regular epimorphisms whose domain is $M$ are described up to isomorphism as the quotients by all possible effective congruences on $M$ whose quotients exist.

\separate

A category category $\C$ is defined to be \emph{regular}~\cite{Grillet:regular} if it satisfies the conditions:
\begin{enumerate}[label=\textnormal{(\roman*)}] 
\item $\C$ has finite limits, 
\item pullbacks preserve regular epimorphisms, and
\item every morphism in $\C$ decomposes as a regular epimorphism followed by a monomorphism.
\end{enumerate}
The factorization condition~(iii) can be replaced~\cite[\S 2.1]{Borceux:handbook2} instead by the following condition on congruences:
\begin{enumerate}
    \item[\textnormal{(iii')}] every effective congruence has a quotient.
\end{enumerate}
It follows from~(iii') and the discussion above that for an object $M$ in a regular category $\catC$, we have bijective correspondences between regular epimorphic images of $M$ and effective congruences on $M$ in the following way:
\begin{equation}\label{eq:quotient correspondence}
\begin{tikzcd}[row sep=tiny]
\left\{\begin{array}{c}
    \textnormal{regular epimorphisms} \\
    f \colon M \twoheadrightarrow M'
    \end{array}\right\} \ar[r, leftrightarrow, "\sim"]
    & 
\left\{\begin{array}{c}
    \textnormal{effective congruences} \\
    R \hookrightarrow M \times M
    \end{array}\right\} \\
f \ar[r, mapsto] & R_f \\
(q \colon M \twoheadrightarrow M/R) & \ar[l, mapsto] R.
\end{tikzcd}
\end{equation}

Regular categories have a satisfactory theory of image factorization: every morphism has a (unique up to isomorphism) factorization as a regular epimorphism followed by a monomorphism, and these factorizations are pullback-stable. 
To be precise, if $f \colon M \to N$ is a morphism in a regular category $\C$, then this factorization is given by the factorization of $f$ through the quotient $q$ by its kernel pair $R_f$:
\begin{equation}\label{eq:epimono}
\begin{tikzcd}
M \ar[rr, "f"] \ar[dr, twoheadrightarrow, "q"]& & N \\
& M/R_f \ar[ur, hookrightarrow, "i"] &
\end{tikzcd}
\end{equation}
where $i$ is uniquely determined. Furthermore, regular epimorphisms are closed under composition~\cite[Corollary~2.23]{BournGran}.

Finally, a \emph{Barr exact} category is a regular category in which every congruence is effective. If $\C$ is Barr exact, then for every object $M$ in $\C$, ~\eqref{eq:quotient correspondence} simplifies further: the quotients of $M$ are in bijective correspondence with the isomorphism classes of congruences on $M$. 

\subsection{Quotients of hyperstructures}

It is well known that all varieties of algebras in the sense of universal algebra are Barr exact~\cite[Example~A.5.16]{BorceuxBorn}. In particular, groups, rings, modules over a ring, and monoids all form Barr exact categories. Thus quotients of an object $M$ of any of these categories are entirely characterized by congruences on that object, which are simply equivalence relations $R \subseteq M \times M$ such that $R$ is a subobject in the category.

It was shown in~\cite[Theorem~1.2]{NakamuraReyes} that the categories of mosaics and semimosaics are regular. 
However, as we will see in Example~\ref{ex:not exact}, these categories are \emph{not} Barr exact. That is, there exist congruences on mosaics which are not effective.
So in order to describe all quotients of a mosaic as in~\eqref{eq:quotient correspondence}, we must characterize which congruences are effective. We accomplish this goal in
Theorem~\ref{thm:effective congruence} and Corollary~\ref{cor:mosaic quotients}. We state the main results for the category of mosaics below, although the paper proves the corresponding statements for semimosaics as well.

Throughout this paper, we let ``congruence" refer to the (categorical) congruences defined above and reserve ``equivalence relation" for set-theoretic equivalence relations (i.e. congruences in $\Set$).

\begin{theorem}
Given a mosaic $M$, the isomorphism classes of effective congruences on $M$ bijectively correspond to the equivalence relations $\equiv$ on $M$ that satisfy the conditions:
\begin{enumerate}[label=\textnormal{(\roman*)}]
    \item if $x \in y' \star e' \cup e' \star y'$ for some $y' \equiv y$ and $e' \equiv e$, then $x \equiv y$
    \item if $x \equiv y$ then $x^{-1} \equiv y^{-1}$.
\end{enumerate}
The quotient of $M$ corresponding to such an equivalence relation is given by the set-theoretic quotient $q \colon M \to M/\!\!\equiv$, where the set of equivalence classes is equipped with the following hyperoperation:
\[
a \star b = q(q^{-1}(a) \star q^{-1}(b)).
\]  
\end{theorem}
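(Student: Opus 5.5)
The plan is to reduce everything to underlying sets, using the description of limits in the category of mosaics from~\cite{NakamuraReyes}. There, finite limits (in particular products and pullbacks) are computed on underlying sets, with the hyperoperation induced componentwise. So for a morphism $f \colon M \to N$ the kernel pair $R_f$ is the set $\{(x,y) : f(x) = f(y)\}$ with the induced structure, and that structure is determined by the underlying set. Hence an effective congruence is determined up to isomorphism by its underlying equivalence relation $\equiv$ on $M$. The map $R \mapsto \{(r_1(u), r_2(u))\}$ is therefore injective on isomorphism classes of effective congruences. It remains to identify its image as exactly the equivalence relations satisfying (i) and (ii).

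\emph{Necessity.} Suppose $\equiv$ is the kernel relation of a morphism $f$. Recall that morphisms satisfy $f(e)=e$, $f(x \star y) \subseteq f(x) \star f(y)$, and preserve inverses.
\begin{itemize}
\item For (i): if $x \in y' \star e'$ with $y' \equiv y$ and $e' \equiv e$, then $f(x) \in f(y) \star e = \{f(y)\}$. Hence $x \equiv y$, and symmetrically for $e' \star y'$.
\item For (ii): it follows from $f(x^{-1}) = f(x)^{-1}$.
\end{itemize}

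\emph{Sufficiency.} Given $\equiv$ satisfying (i) and (ii), put on $M/\!\equiv$ the hyperoperation $a \star b = q(q^{-1}(a) \star q^{-1}(b))$ and check the mosaic axioms in turn.
\begin{itemize}
\item \emph{Identity.} The identity is $[e]$. Condition (i) gives $a \star [e] \subseteq \{a\}$, and $y \in y \star e$ gives equality.
\item \emph{Inverses.} Inverses are $[x^{-1}]$, which is well defined by (ii). Since $e \in x \star x^{-1}$, we have $[e] \in [x] \star [x^{-1}]$. For uniqueness: if $[e] \in [x] \star [z]$, then $e' \in x' \star z'$ for some $e' \equiv e$. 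By reversibility in $M$, $z' \in x'^{-1} \star e'$, so by (i) $z' \equiv x'^{-1} \equiv x^{-1}$.
\item \emph{Reversibility.} It transfers similarly, by lifting an instance to representatives, applying reversibility in $M$, and projecting.
\end{itemize}

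\emph{Associativity (main obstacle).} Both sides of $(a \star b) \star c = a \star (b \star c)$ involve a re-saturation $q^{-1}q(\cdot)$. The key lemma I would prove is
\[
q^{-1}(q(S)) \star C \;\subseteq\; q^{-1}\bigl(q(S \star C)\bigr)
\]
for $C$ a full class, and symmetrically on the left. To prove it, take $z \equiv w$ with $w \in S$. By reversibility there is $e'' \in w^{-1} \star z$ with $z \in w \star e''$. I then want $e'' \equiv e$. Here I would use $w^{-1} \equiv z^{-1}$ (from (ii)) together with $e \in z^{-1} \star z$, arguing via the class of $z^{-1} \star z$ and condition (i) that $e'' \equiv e$. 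Then
\[
z \star c \subseteq (w \star e'') \star c = w \star (e'' \star c),
\]
and $e'' \star c$ lies in the class of $c$ by (i). This gives $q(z \star c) \subseteq q(w \star C) \subseteq q(S \star C)$. With the lemma, both bracketings equal $q(A \star B \star C)$ by associativity in $M$. If the step $e'' \equiv e$ resists, I would instead try to show directly that the saturation of $A \star B$ equals $A \star B \star [e]$, using (i) in both directions.

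\emph{Conclusion.} Finally, $q$ is a morphism by construction, since $q(x \star y) \subseteq q(x) \star q(y)$ and $q(e) = [e]$. Its kernel pair has underlying relation exactly $\equiv$, so $\equiv$ comes from an effective congruence. Moreover, $q$ is its quotient by the correspondence~\eqref{eq:quotient correspondence}, since the regular-epi/mono factorization in a regular category is unique.
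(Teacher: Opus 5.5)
\textbf{Associativity section.} Your treatment of necessity, of injectivity on isomorphism classes, and of the identity, inverse and reversibility axioms on $M/\!\!\equiv$ is correct. The associativity section, however, has to be deleted: it addresses a requirement that does not exist, and the argument in it fails.

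Mosaics are not assumed associative; a hypergroup is \emph{defined} to be an associative mosaic. So identity plus reversibility is all you need to check. You also invoke associativity of $M$ itself, in $(w\star e'')\star c=w\star(e''\star c)$ and in writing $q(A\star B\star C)$, which is not available.

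Your claims are false even when $M$ is a group. Take $M=\Z/5\Z$ with $1\equiv -1$ and all other classes singletons; this relation satisfies (i) and (ii).
\begin{itemize}
\item For $w=1$, $z=-1$ you get $e''\in w^{-1}\star z=\{-2\}$, and $-2\not\equiv 0$.
\item Your key lemma fails for $S=\{1\}$, $C=\{2\}$: $q^{-1}(q(S))\star C=\{1,-2\}$, but $q^{-1}(q(S\star C))=\{-2\}$.
\item Your fallback fails too: $\{1,-1\}\star\{2\}=\{1,-2\}$ is not a union of classes.
\end{itemize}
The resulting quotient is genuinely non-associative, since $[1]\star([2]\star[2])\neq([1]\star[2])\star[2]$, as the paper records in its $\Z/5\Z$ example.

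\textbf{Final step.} Your last sentence needs one more ingredient. Uniqueness of the regular-epi/mono factorization identifies $q$ with the quotient by $R_q$ only once you know that $q$ itself is a regular epimorphism. This holds because $q$ is short by construction, and short morphisms are exactly the regular epimorphisms in $\Msc$. Alternatively, check it directly: if $g\colon M\to N$ is constant on $\equiv$-classes, then $h([x]):=g(x)$ is well defined, satisfies $h([e])=e$, and
$h(a\star b)=g(q^{-1}(a)\star q^{-1}(b))\subseteq h(a)\star h(b)$.
Hence $q$ is the coequalizer of $R_q\rightrightarrows M$.

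\textbf{Comparison with the paper.} With these two repairs your proof is complete, and it takes a different route from the paper's.

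The paper first shows, for an arbitrary congruence $R$, that the categorical quotient has the set-theoretic quotient as its underlying set if and only if $R$ is a mosaic equivalence. It then proves that effective congruences are exactly the regular submosaics of $M\times M$ that are mosaic equivalences. Sufficiency is shown by verifying directly that such an $R$ is the equalizer of $p\circ q_1$ and $p\circ q_2$. Necessity uses coshortness of equalizers (from Nakamura--Reyes) together with that proposition.

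You instead work with kernel pairs of morphisms from the start. Necessity becomes a two-line computation with $f(e)=e$ and $f(x\star y)\subseteq f(x)\star f(y)$. Sufficiency comes from building $q$ and taking its kernel pair. The explicit description of pullbacks in $\Msc$ (underlying set as in $\Set$, with induced structure) gives both injectivity and the underlying relation of $R_q$.

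Your route is shorter and more elementary. The paper's route gives more: an intrinsic description of effective congruences as regular submosaics, which is exactly what separates $\K\times\K$ from the non-effective $\Z_2\times\Z_2$ with the same underlying relation, and information about quotients by non-effective congruences.
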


\begin{proof}
    This summarizes Theorem~\ref{thm:effective congruence}, Corollary~\ref{cor:mosaic quotients} and Example~\ref{ex:not exact} for the category of mosaics.
\end{proof}

The fact that congruences on mosaics require the special conditions~(i) and~(ii) to be effective strongly suggests that $\Msc$ is not Barr exact, and we verify this with an example in Remnark~\ref{rem:malcev}. There are various weaker ``exactness'' conditions that one could ask for, and we show that some of these fail to hold for $\Msc$.

Notably, there is one ``exactness'' property that \emph{does} hold for the category of mosaics. This is the property of being a \emph{parabelian} category~\cite{Mozgovoy}, which is in turn defined as a pointed category (i.e., category with zero object) for which the normal mono- and epimorphisms form a \emph{proto-exact} structure~\cite{DK}. These definitions are reviewed in Subsection~\ref{sub:exact}. 

The following summarizes our results in this direction.

\begin{theorem}
The category $\Msc$ of mosaics is parabelian, but it is not Barr exact, protomodular, Malcev, or proto-abelian. 
\end{theorem}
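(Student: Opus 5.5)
The theorem splits into a positive claim (parabelian) and four negative claims. I will handle the negative claims with explicit small counterexamples and the positive claim by verifying the axioms of a proto-exact structure directly. Throughout I use the description of regular epimorphisms and effective congruences from the previous theorem, and the fact that $\Msc$ is regular and complete and cocomplete~\cite{NakamuraReyes}.

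\textbf{Negative claims.} First I show $\Msc$ is not Barr exact by exhibiting a congruence on a small mosaic that fails condition~(i) of the previous theorem. Take a group $G$ with a subgroup $H$ that is not normal, and let $\equiv$ be the coset relation $x \equiv y \iff xH = yH$. This is an equivalence relation, and its graph $R \subseteq G\times G$ with the induced hyperoperation should be a submosaic of $G \times G$, so it is a categorical congruence. However, products of representatives are not constant on classes, so (i) fails; this is Example~\ref{ex:not exact}.

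Next I show $\Msc$ is not Malcev. I will look for a reflexive relation in $\Msc$ that is not symmetric. A natural choice is a relation on the integers $\Z$ (as a group) of the form $\{(a,b) : a \le b\}$. This is not a submosaic of $\Z\times\Z$ because it is not closed under inverses, so I will instead take a hypergroup in which a suitable "order" relation is closed under hyperoperations and inverses. Since being Malcev is implied by protomodularity in the pointed finitely complete setting, a failure of Malcev also rules out protomodularity. Independently, I would check the split short five lemma fails using a split extension of a mosaic by a group.

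Finally, I show $\Msc$ is not proto-abelian. In the sense of~\cite{DK}, proto-abelian requires that pushouts of normal monos along arbitrary maps be normal monos, or an analogous pullback condition. I would exhibit a normal mono $N \hookrightarrow M$ and a morphism $N \to N'$ whose pushout, computed via the colimit description in~\cite{NakamuraReyes}, is not a kernel.

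\textbf{Positive claim.} The zero object is the trivial mosaic $\{e\}$. Kernels are the preimages of $e$, which are submosaics. Cokernels are quotients by the effective congruence generated by $N$, using condition~(i) of the previous theorem. With these in hand, I verify the proto-exact axioms:
\begin{enumerate}
\item Normal monos and normal epis contain the isomorphisms and are closed under composition. For epis this follows from regularity together with a check that cokernels compose. For monos it follows because a kernel inside a kernel is a kernel.
\item A square with two normal monos and two normal epis is a pushout if and only if it is a pullback.
\item Pullbacks of normal epis along normal monos exist and are normal epis, and pushouts of normal monos along normal epis exist and are normal monos.
\end{enumerate}
For pullback-stability in the last axiom I invoke regularity, but I still need to check that the resulting epi is a cokernel. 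The pushout of a normal mono $N \hookrightarrow M$ along $N \twoheadrightarrow N/K$ should be $M \twoheadrightarrow M/\!\equiv$, where $\equiv$ is generated by $K$. The image of $N$ in this quotient is $N/K$, and I must verify it is a kernel.

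I expect axiom (2) to be the main obstacle, together with the claim that normal epis compose. The cokernel of $N \hookrightarrow M$ is the quotient by the smallest equivalence relation satisfying conditions (i) and (ii) of the previous theorem and containing $N\times N$. I would first prove that the kernel of $q$ is the closure $\{x : x \in n\star \cdots\}$ and then that normal epis are exactly quotients by relations "generated by their kernel". The bicartesian square comparison then reduces to a set-level computation on equivalence classes.
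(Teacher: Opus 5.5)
There are genuine gaps: none of the four negative claims is established, and the first rests on a step that fails.

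\textbf{Not Barr exact.} The coset relation $xH=yH$ of a non-normal subgroup $H\le G$ is not a congruence in $\Msc$ at all. A subobject of $G\times G$ must be closed under inverses. But $(x,xh)\in R$ would force $(x^{-1},h^{-1}x^{-1})\in R$, which holds only if $xh^{-1}x^{-1}\in H$; this fails for $x=(13)$, $h=(12)$, $H=\{e,(12)\}\le S_3$. Every congruence in $\Msc$ satisfies condition~(ii) automatically (Lemma~\ref{lem:congruences}). Also, ``products of representatives are not constant on classes'' is the strictness criterion of Proposition~\ref{prop:strict epi}, not condition~(i). Example~\ref{ex:not exact} is of a different nature: the universal relation on $\Z$ carried by a non-regular hyperoperation on $\Z\times\Z$, which is not effective by Theorem~\ref{thm:effective congruence}. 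An inverse-closed relation violating~(i), given its regular structure, would also work. For instance, take the classes $\{0,\pm1\},\{2\},\{-2\}$ on $\Z/5\Z$, where $-2\in 2+1$.

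\textbf{Not Malcev.} You give no example, and the route you propose cannot work. Suppose $R\subseteq M\times M$ is reflexive, inverse-closed and closed under $\star$, and $M$ has nonempty products (as any hypergroup does). For $(a,b)\in R$ and $c\in b\star a^{-1}$ you get $(c,e)\in(b,b)\star(a^{-1},b^{-1})\subseteq R$. Then $(b,a)\in(c,e)\star(a,a)\subseteq R$ by reversibility, so $R$ is symmetric. The missing idea is to use a regular subobject, whose products may be empty, as in Remark~\ref{rem:malcev}: $\{(0,0),(1,1),(1,0)\}\subseteq\Z_2\times\Z_2$, which is inverse-closed since every element of $\Z_2$ is self-inverse.

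\textbf{Not proto-abelian.} The condition you state is not the definition, and a pushout along an arbitrary map would not contradict it. The direct argument is that admissible monos of a proto-exact structure are kernels, so proto-abelian forces every mono to be normal. But $\Z_2\hookrightarrow\K$ is injective and not strict (Remark~\ref{rem:proto-abelian}).

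\textbf{Parabelian.} Here you have a plan rather than a proof, and the effort is aimed at the wrong places. Parabelian only asks for kernels, cokernels and the two stability conditions. The bicartesian axiom is redundant for normal classes, and proto-exactness then follows from Mozgovoy's theorem~\cite{Mozgovoy}, so your ``main obstacle'' need not be faced. Also, ``a kernel inside a kernel is a kernel'' is false in general; in $\Msc$ it holds only because normal monos are exactly the strict injective morphisms. What is actually needed is missing:
\begin{itemize}
\item For pullbacks, regularity only makes $p_1\colon A\times_C B\to A$ a regular epimorphism. The key step (Lemma~\ref{lem:pullback}) is that $p_1$ is the cokernel of $D=\{(1_A,x') : g(x')=1_C\}$. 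That is, a morphism killing $D$ agrees on any two lifts $(x,x'),(x,y')$. This is proved by joining $x'$ to $y'$ through a chain with steps in $\ker g$, as in Lemma~\ref{lem:cokernel}.
\item For pushouts, you correctly identify the pushout of $N\hookrightarrow M$ along $N\twoheadrightarrow N/K$ with $M\twoheadrightarrow M/K$ (a cokernel pushes out to the cokernel of the composite). This is a cleaner route than the paper's $\HMag$-pushout-plus-unitization argument in Corollary~\ref{cor:pushout}. But the real content is the check you omit. A chain $z_{i+1}\in z_i\star K\cup K\star z_i$ (or the reverse) starting in the strict submosaic $N$ must stay in $N$, by strictness and reversibility. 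That is what makes $N/K\to M/K$ injective and strict.
\end{itemize}
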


\begin{proof}
This follows from Remark~\ref{rem:malcev}, Theorem~\ref{thm:parabelian}, and Remark~\ref{rem:proto-abelian}.
\end{proof}
\separate

The outline of this paper is as follows.
In Section~\ref{sec:hyperstructures}, we recall the definitions of hypermagmas, hypergroups, semimosaics, and mosaics, along with some features of their categories that were established in~\cite{NakamuraReyes}. 
Section~\ref{sec:congruences} focuses on the study of congruences and effective congruences in the categories of mosaics and semimosaics, concluding with a proof that the category of mosaics is parabelian. 
Finally, in Section~\ref{sec:examples} we apply our characterization to explore some examples of effective congruences on certain mosaics. This includes quotients by endomorphisms and groups of automorphisms, as well as some examples of quotient mosaics of some small groups.

\section{Background on Hyperstructures}
\label{sec:hyperstructures}

In this section, we provide definitions of the various objects studied in this paper, along with some previously established results concerning them. Much of the content is from \cite{NakamuraReyes}, to which we refer the reader for a more detailed discussion.

\begin{definition}
    A \textit{hypermagma} $(M, \star)$ is a set $M$ endowed with a function $$\star:M\times M\to \P(M).$$
\end{definition}

The function $\star$ above is called the \textit{hyperoperation} of $M$. The hyperoperation extends to a binary operation on $\P(M)$ by defining, for $X, Y\in \P(M)$:
$$
X\star Y = \bigcup_{(x, y) \in X\times Y} x\star y.
$$
By a slight abuse of notation, we will often identify an element $x\in M$ with the singleton $\{x\}\in \P(M)$. Then, we say that the hyperoperation $\star$ on $M$ (or simply, $M$) is \textit{associative} if for every $x, y, z\in M$,
$$
(x\star y)\star z = x\star(y \star z)
$$
where $\star$ is understood as the extended operation on $\P(M)$. Similarly, $\star$ (or $M$) is \textit{commutative} if for every $x, y\in M$,
$$
x\star y = y\star x.
$$

For a hypermagma $(M, \star)$, an element $e\in M$ is an \textit{identity} if for every $x\in M$, 
$$
x\star e = x = e\star x.
$$
It is easy to show that such an identity, if it exists, must be unique. This leads to the following definition.
\begin{definition}
    A \textit{semimosaic} $(M, \star, e)$ is a hypermagma with identity.
\end{definition} 

For a semimosaic $(M, \star, e)$, an \textit{inverse} of an element $x\in M$ is any element $x'\in M$ such that 
$$
e\in (x\star x')  \cap (x'\star x).
$$
Similarly, if $(M, \star)$ is a hypermagma, then $\star$ (or $M$) is called \textit{reversible} if there exists a function $(\cdot)^{-1}:M\to M$ such that for every $x, y, z\in M$,
$$
z\in x\star y \implies z\star y^{-1} \ni x \mbox{ and } x^{-1} \star z \ni y.
$$
A reversible operation thus allows for ``solving" membership relations for hypermagmas the way one might solve equations for groups. 

In general, an element of a semimosaic might have more than one inverse. However, if the semimosaic is reversible, then one can show that every element $x$ must have a unique inverse $x^{-1}$, namely the image of $x$ under the function $(\cdot)^{-1}$ above. This brings us to the following definition.
\begin{definition}
    A \textit{mosaic} $(M, \star, e, (\cdot)^{-1})$ is a reversible semimosaic. 
\end{definition}

Note that none of the structures so far are required to be associative. 

\begin{definition}
    A \textit{hypergroup} is an associative mosaic. A \textit{canonical hypergroup} is a commutative hypergroup.
\end{definition}

A hyperoperation on a hypermagma $M$ is called \textit{total} if its range is contained in $\P(M)\setminus \varnothing$. It follows from \cite[Lemma 2.6]{NakamuraReyes} that the hyperoperation on a hypergroup is always total, but this is not, in general, the case for mosaics or semimosaics. Finally, observe that a group is a hypergroup whose hyperoperation is single-valued.

(We remark that the terminology above, which agrees with that of~\cite{Zieschang}, differs from some of the mainstream literature on hypergroups \cite{CL, Jantosciak, MM}. What we call a hypergroup might otherwise be called a \emph{polygroup} or a \emph{reversible hypergroup with scalar identity}.)

A \textit{morphism of hypermagmas} is a function $f:M\to N$ such that for every $x, y \in M$,
$$
f(x\star y) \subseteq f(x)\star f(y).
$$
If we have equality in place of inclusion above, then the morphism is called \textit{strict}. A \emph{morphism of semimosaics} $f:M\to N$ is a morphism of hypermagmas that additionally satsifies $f(e_M)=e_N$, where $e_M$ and $e_N$ are the identities of $M$ and $N$ respectively. If $M$ and $N$ are mosaics, then any morphisms of semimosaics $f:M\to N$ also preserves inverses: for any $x\in M$, $e_N=f(e_M)\in f(x\star x^{-1})\subseteq f(x)\star f(x^{-1})$, and reversibility yields $f(x^{-1})=f(x)^{-1}$. We, therefore, define a morphism of mosaics to simply be a function between mosaics that is a morphism of semimosaics. A morphism of hypergroups too is defined in the same way. 

With morphisms defined so, hypermagmas, semimosaics, mosaics and hypergroups form categories that we will denote as $\HMag$, $\SMsc$, $\Msc$ and $\HGrp$ respectively. Then, letting $\Grp$ be the category of groups, we have the following chain of inclusions:
$$
\Grp \subseteq \HGrp \subseteq \Msc \subseteq \SMsc \hookrightarrow \HMag.
$$
Here, $\subseteq$ denotes a full subcategory and $\hookrightarrow$ denotes a faithful embedding. We also let $\cMsc$ and $\cSMsc$ denote the full subcategories of $\Msc$ and $\SMsc$ of commutative mosaics and commutative semimosaics respectively. 

It is shown in~\cite{NakamuraReyes} that the categories $\Msc$, $\SMsc$ and $\cMsc$ are complete, cocomplete, and regular, and that the forgetful functor from $\Msc$ to $\Set$ has a left adjoint (so ``free" objects exist in $\Msc$). Moreover, the one-element mosaic $\{e\}$ with $e\star e=e$ acts as a zero object in $\SMsc$ and $\Msc$. \cite{NakamuraReyes} also shows that neither $\HGrp$ nor $\Can$, the full subcategory of $\HGrp$ of canonical hypergroups, is complete or cocomplete.

\begin{example}
Hypergroups classically arise as quotients of ordinary groups. Suppose $G$ is a group and $\equiv$ is an equivalence relation on $G$ such that for every $a, b\in G$:
    \begin{itemize}
        \item The setwise product $[a][b] = \{a'b' : a' \equiv a \hspace{1mm} \text{and} \hspace{1mm} b'\equiv b$\} is a union of equivalence classes (where $[a]$ denotes the equivalence class of $a$),
        \item $[e][a] = [a] = [a][e]$, and
        \item The setwise inversion $[a]^{-1} = \{a'^{-1}: a' \equiv a\} = [a^{-1}]$.
        \end{itemize}
Then, the set-theoretic quotient $G/\!\!\equiv$ forms a hypergroup with 
$$
[a]\star [b] := \{[c] : [c]\subseteq [a][b]\}=\{[c]: c\in [a][b]\}.
$$
Moreover, the quotient map $G\twoheadrightarrow  G/\!\!\equiv$ is a morphism of hypergroups.

\begin{itemize}
    \item Suppose a group $G$ acts on a group $H$ by automorphisms. Then, the orbit equivalence relation $\equiv$ on $H$ where $a\equiv b$ iff $g(a)=b$ for some $g\in G$ satisfies the conditions above, turning the set of orbits $H/G$ into a hypergroup with hyperoperation as defined above. For instance, the multiplicative group $\{\pm 1\}$ acts on $\Z$ by multiplication, yielding the quotient hypergroup $\Z /\{\pm 1\}$, where $[1]+[1] = \{[0], [2]\}$.
    \item We let $\K$ denote the hypergroup whose underlying set is $\{0, 1\}$, where 0 acts as the identity and $1+1=\{0, 1\}$. Then, $\K = F/F^\times$ by the quotient construction above for any field $F$ with at least 3 elements and where $F^\times$ acts on $F$ by multiplication. This is the underlying hypergroup of the Krasner hyperfield \cite{Krasner}.
    \item   By the construction above, the conjugacy classes of a group $G$ form a hypergroup which is canonical even if $G$ is not abelian.
\end{itemize}
\end{example}

\begin{example}
        There is a wealth of examples of mosaics that are not hypergroups, in the form of matroids. A closure space $(M, C)$ is a set $M$ with a map $C:\P(M)\to \P(M)$ called the closure operator that satisfies, for every $S, T\subseteq M$, (i) $S\subseteq C(S)$, (ii) $S\subseteq T \implies C(S)\subseteq C(T)$, and (iii) $C(C(S)) = C(S)$. A \textit{matroid} is a closure space $(M, C)$ that also satisfies the exchange property: for every $x, y\in M$ and $S\subseteq M$, $x\notin C(S)$ and $x\in C(S\cup \{y\})$ implies $y\in C(S\cup \{x\}).$ A matroid $(M, C)$ is a \textit{simple pointed matroid} if there exists $0\in M$ such that $C(\varnothing) = \{0\}$ and $C(x) = \{0, x\}$ for every $x\in M$. Given a simple pointed matroid $(M, C, 0)$, we can turn it into a commutative mosaic with underlying set $M$ by defining, for every $x, y\in M$:

         \[ x + y = \begin{cases} 
            C(x, y)\setminus \{x, y, 0\} & x\neq y \\
            \{x, 0\} & x=y \\
        \end{cases}
        \]    
     In fact, \cite{NakamuraReyes} show that there is a faithful functor from the category of simple, pointed matroids (with morphisms appropriately defined) into $\cMsc$.
\end{example}

\subsection{Subobjects and quotient objects}

 In an abelian category such as $\Ab$, there are various ``flavors'' of monomorphisms and epimorphisms that always coincide but which must be carefully separated in other categories. This separation occurs in $\SMsc$ and $\Msc$ as well as in the respective full subcategories $\cSMsc$ and $\cMsc$. Thus, these categories are not abelian. \cite{NakamuraReyes} gives a complete characterization of the various monomorphisms and epimorphisms in these categories, which we recall in this section. We will be particularly interested in the difference between types of quotients that one can take, and therefore, in the characterizations of the epimorphisms.

First, we recall the following definitions for any category $\C$:
\begin{itemize}
\item A morphism in $\C$ is a \emph{regular} monomorphism (resp., epimorphism) if it is an equalizer (resp., coequalizer) of a parallel pair of morphisms in $\C$.
\item Suppose $\C$ has a zero object. A morphism in $\C$ is a \emph{normal} monomorphism (resp., epimorphism) if it is the kernel (resp., cokernel) of a morphism in $\C$.
\end{itemize}
This gives a hierarchy of monomorphisms and epimorphisms as follows:
\begin{center}
normal mono (epi) $\implies$ regular mono (epi) $\implies$ mono (epi).
\end{center}

The monomorphisms and epimorphisms in $\HMag$, $\Msc$ and $\SMsc$ are respectively the injective and surjective morphisms \cite{NakamuraReyes}. For hypermagmas $L\subseteq M$, if the inclusion $i:L\to M$ is a morphism of hypermagmas (and thus a monomorphism), then we call $L$ a \textit{subhypermagma} of $M$. We similarly define the notions of \textit{subsemimosaic} and \textit{submosaic}. Note that in each of these three categories, a monomorphism of the form $i \colon L \to M$ is \emph{not} completely described by the sub\textit{set} $i(L) \subseteq M$, because this does not uniquely determine the hyperoperation on $L$. Dually, an epimorphism $p \colon M \twoheadrightarrow N$ is not fully described by the equivalence relation corresponding to the set-theoretic quotient of $M$ by $p$ for the similar reason that this does not uniquely specify a hyperoperation on $N$. Thus, when specifying a subobject or quotient object in these categories, one must specify a subset or set-theoretic quotient along with the intended hyperoperation. Consider the following examples.

\begin{example}
Consider $\K =\{0, 1\}$ as in Example 2.5. Then, $\K$ and $\Z_2$ are both submosaics of $\K$ with the same underlying set but with distinct hyperoperations on them: $1+1=\{0, 1\}$ in $\K$ whereas $1+1=0$ in $\Z_2$. 

Similarly, consider the commutative mosaic $D$ with underlying set $\{0, 1, -1\}$ where $0$ acts as the identity, $1-1=0$, and all other sums are empty. Then, $p:D\to \K$ where $0\mapsto 0$ and $1, -1\mapsto 1$, and $p':D\to \Z_2$ where also $0\mapsto 0$ and $1, -1\mapsto 1$ are both surjective morphisms with codomains that arise from the same set-theoretic equivalence relation on $D$ but which possess distinct hyperoperations.
\end{example}

Fortunately, regular monomorphisms and epimorphisms do not suffer from the same shortcomings. We recall their characterization from~\cite[Theorem 1.2]{NakamuraReyes}. First, we have the following definition.

\begin{definition} In the categories $\SMsc$ and $\Msc$:
\begin{itemize}
    \item A morphism $p \colon M \to N$ is \emph{short} if it satisfies
 \[
 x \star y = p(p^{-1}(x) \star p^{-1}(y)) \quad \mbox{for all } x,y \in N.
 \]
    \item A morphism $i \colon L \to M$ is \emph{coshort} if it satisfies
 \[
 i^{-1}(i(x) \star i(y)) = x \star y \quad \mbox{for all } x,y \in L.
 \]
\end{itemize}
    
\end{definition}

It follows (setting $y = e$) that a short morphism is surjective, and that the hyperoperation of $N$ is uniquely determined by the hyperoperation of $M$ and the function $p$. Similarly, we see that a coshort morphism is injective and that the hyperoperation of $L$ is uniquely determined as above. 

Then in each of the categories $\SMsc$ and $\Msc$, we have the following equivalence of properties of morphisms:
\begin{center}
regular epimorphism $\iff$ short morphism, \\
regular monomorphism $\iff$ coshort morphism.
\end{center}
Given any subset $L$ of a hypermagma $(M, \star_M)$, there is a canonical hyperoperation that $L$ inherits from $M$ that is defined as follows:
\[
x\star_L y := (x\star_M y)\cap L.
\] 
This hyperoperation turns $L$ into a subhypermagma of $M$, and we call such an $L$ a \textit{regular subhypermagma} of $M$. Similarly, if $M$ is a semimosaic, then $L$ is a \textit{regular subsemimosaic} of $M$ if it is a regular subhypermagma of $M$ that contains the identity. If $M$ is a mosaic, then $L$ is a \textit{regular submosaic} of $M$ if it is a regular subsemimosaic of $M$ that is also closed under inversion.

It was observed in~\cite{NakamuraReyes} that a coshort morphism $i: L \to M$ in $\HMag$ is the same as an isomorphism of $L$ onto the regular subhypermagma with underlying set $i(L)$ of $M$. Thus, the coshort morphisms and consequently, the regular monomorphisms in $\HMag$, $\SMsc$ and $\Msc$ correspond to the regular subobjects in the above sense.\footnote{Note that regular subhypermagmas, and regular sub(semi)mosaics are called \emph{weak} subhypermagmas and weak sub(semi)mosaics in \cite{NakamuraReyes}. We use the term \textit{regular} because of the correspondence to regular monomorphisms.}

Finally, it was shown in~\cite{NakamuraReyes} that normal monomorphisms are precisely the strict injective morphisms. Another way to understand this is as follows. For a hypermagma $M$, we define a \emph{strict subhypermagma} of $M$ to be a subset $L \subseteq M$ that satisfies
\[
x,y \in L \implies x \star y \subseteq L.
\] 
Setting products in $L$ to be the same as those in $M$, then, $L$ inherits the structure of a hypermagma with the inclusion $L \hookrightarrow M$ being a strict morphism. Similarly, if $M$ is a semimosaic, then we say $L$ is a \emph{strict subsemimosaic} of $M$ if it is a strict subhypermagma of $M$ that contains the identity. If $M$ is a mosaic, then $L$ is a \emph{strict submosaic} of $M$ if it is a strict subsemimosaic of $M$ that is also closed under inversion. 

We say that a subset $L$ of a hypermagma $M$ is \emph{absorptive} if, for all $x \in M$,
\[
(x \star L \cup L\star x) \cap L \neq \varnothing \implies x \in L,
\] or in elementwise form,
\[
a \in x \star b \hspace{1mm} \text{or} \hspace{1mm} a\in b\star x \mbox{ for some } a,b \in L \implies x \in L.
\]
Note that if $M$ is a mosaic and $L$ is a strict submosaic, $L$ is automatically absorptive by reversibility. 
Then, normal monomorphisms in $\SMsc$ and $\Msc$ with codomain $M$ correspond precisely to strict absorptive subobjects in the above sense:
\begin{center}
\begin{tabular}{c}
normal monomorphism to \\
a (semi)mosaic $M$ \end{tabular}
$\iff$ strict (absorptive) subobject $L \subseteq M$.
\end{center}
Thus, we have so far, three levels of monomorphisms (ordinary, regular and normal) corresponding to three levels of morphisms (injective, coshort and strict) and three levels of subobjects (ordinary, regular and strict) respectively.

To understand normal epimorphisms in $\SMsc$ and $\Msc$, we explicitly describe the type of quotient object that arises. The case of regular epimorphisms is the subject of the next section. Of course, any cokernel morphism is the cokernel of its own kernel. So it suffices to describe normal epimorphisms that are coequalizers of the form
\[
\begin{tikzcd}
L \ar[r, shift left, "i"] \ar[r, shift right, swap, "0"] & M \ar[r, twoheadrightarrow] & N
\end{tikzcd}
\]
where $i$ is a normal monomorphism, or up to isomorphism, where $L$ is a strict subobject of $M$. In this case, we will denote the cokernel of the inclusion $L \hookrightarrow M$ by $M/L$. The case where $M$ is a canonical hypergroup was discussed in~\cite[Section~3]{Jun:geometry}.

\begin{lemma}\label{lem:cokernel}
Let $M$ be a mosaic (resp., semimosaic) with a strict submosaic (resp., strict absorptive subsemimosaic) $L \subseteq M$. The cokernel of the inclusion
\[
L \hookrightarrow M \overset{p}{\twoheadrightarrow} M/L
\]
has underlying set $M/L = M/\!\!\equiv$ given by the quotient of $M$ by the equivalence relation defined by $x \equiv y$ if and only if there is a sequence $x = z_1, z_2, \dots, z_n = y$ of elements in $M$ such that 
\[
  z_{i+1} \in z_i \star L \cup L\star z_i \hspace{3mm} \text{or} \hspace{3mm} z_{i} \in z_{i+1} \star L \cup L\star z_{i+1} \hspace{1mm}
\]
for $i = 1, \dots, n-1$. The hyperoperation on $M/L$ has the following form for $a,b \in M/L$: 
\[
a \ostar b = p(p^{-1}(a)\star p^{-1}(b)).
\]

If $M$ is a mosaic that is also associative (i.e. a hypergroup), then the equivalence relation takes the simpler form $x\equiv y$ if and only if $y\in L\star x\star L$. Thus, the underlying set of $M/L$ is the set of double cosets $\{L\star x\star L : x\in M\}$.
\end{lemma}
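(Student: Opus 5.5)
The strategy is to build the cokernel directly and verify its universal property. For the equivalence relation $\equiv$ generated as in the statement, I put on $N := M/\!\!\equiv$ the hyperoperation $a \ostar b = p(p^{-1}(a)\star p^{-1}(b))$, with identity $p(e)$ and, in the mosaic case, inverse $p(x)^{-1} := p(x^{-1})$. The steps, in order, are as follows.

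First, show that $N$ is an object of the right category and $p$ is a morphism. The inequality $p(x\star y)\subseteq p(x)\ostar p(y)$ holds by construction.

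For the identity I need $a \ostar p(e) = \{a\}$. The inclusion $\supseteq$ is clear from $x\star e = x$. For $\subseteq$, I need every $z \in x'\star e'$ with $x'\equiv x$ and $e'\equiv e$ to satisfy $z\equiv x$. This is where the hypotheses on $L$ enter. I first show that the class of $e$ is exactly $L$:
\begin{itemize}
\item one generating step from $l\in L$ lands in $L\star L\subseteq L$ by strictness;
\item a backward step $l \in z\star L\cup L\star z$ forces $z\in L$ by absorptivity, which is automatic for strict submosaics by reversibility.
\end{itemize}
Hence $e'\in L$, so $z\in x'\star L$, giving $z\equiv x'\equiv x$. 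The symmetric side is handled the same way.

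In the mosaic case I must also check that $\ostar$ is reversible with the given inversion. First, the inversion is well defined: if $z_{i+1}\in z_i\star l$, then reversibility gives $z_i^{-1}\in l\star z_{i+1}^{-1}$. Here I use that $L$ is closed under inverses, and I expect to need that $x\in y\star z$ implies $z^{-1}\in x^{-1}\star y$-type identities, which I would derive from reversibility together with uniqueness of inverses. Reversibility of $\ostar$ then lifts elementwise: if $c\in a\ostar b$, pick $z\in x\star y$ with representatives, and apply reversibility in $M$.

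Second, show $p\circ i = p\circ 0$, which holds because $L$ is the class of $e$.

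Third, prove the universal property. Let $f\colon M\to P$ be a morphism with $f(L)=\{e_P\}$. Then $f$ is constant on classes: if $z_{i+1}\in z_i\star l$, then $f(z_{i+1})\in f(z_i)\star e_P=\{f(z_i)\}$. So $f$ factors through a set map $\bar f$. To see that $\bar f$ is a morphism, note that
\[
\bar f(a\ostar b)=f(p^{-1}(a)\star p^{-1}(b))\subseteq f(p^{-1}(a))\star f(p^{-1}(b)) = \bar f(a)\star\bar f(b).
\]
Uniqueness follows from surjectivity of $p$.

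Finally, the hypergroup simplification. I would show that $x\equiv y \iff y\in L\star x\star L$ by checking three things:
\begin{itemize}
\item The relation $y\in L\star x\star L$ is reflexive, using $e\in L$.
\item It is symmetric: by reversibility and associativity, $y\in l_1\star x\star l_2$ gives $x\in l_1^{-1}\star y\star l_2^{-1}$.
\item It is transitive: associativity and $L\star L\subseteq L$ give $L\star(L\star x\star L)\star L = L\star x\star L$.
\end{itemize}
This relation contains the generating steps and is contained in $\equiv$, since $y\in l_1\star(x\star l_2)$ is a two-step chain. So the two relations coincide.

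The main obstacle I anticipate is the reversibility and inversion bookkeeping in the mosaic case without associativity. The chain steps must be shown compatible with $(\cdot)^{-1}$, which needs the nonassociative identity "$z\in x\star y \Rightarrow z^{-1}\in y^{-1}\star x^{-1}$". If that identity is not available, I would instead close $\equiv$ under inversion, checking that this does not break the identity computation.
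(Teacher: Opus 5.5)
Your proof is correct. For the general (semi)mosaic case it takes a different route from the paper, which does not verify the universal property by hand. The paper deduces the description from \cite[Corollary~4.2, Lemma~3.10]{NakamuraReyes}. There, a coequalizer in $\SMsc$ is obtained from the $\HMag$ coequalizer followed by a unitization, whose equivalence relation is the chain relation generated by the smallest strict absorptive subhypermagma containing the given set. Since $L$ is already strict and absorptive, that relation is exactly your $\equiv$. Your direct construction is self-contained and shows precisely where the hypotheses enter: strictness and absorptivity give $[e]=L$, which makes $p(e)$ an identity and gives $p\circ i=p\circ 0$. The paper's route instead places the lemma inside the general coequalizer-plus-unitization machinery, which it reuses later for pushouts. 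For the hypergroup simplification your argument is essentially the paper's. The paper shows each chain step (forward or backward, the latter via reversibility) lands in $L\star z_i\star L$, then concatenates using associativity and $L\star L\subseteq L$. That is the same content as your check that $y\in L\star x\star L$ is an equivalence relation lying between the generating steps and $\equiv$.

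The obstacle you anticipate at the end does not arise, and your own computation already settles it. From $z_{i+1}\in z_i\star l$, reversibility gives $l\in z_i^{-1}\star z_{i+1}$, and applying reversibility again gives $z_i^{-1}\in l\star z_{i+1}^{-1}$. Likewise, $z_{i+1}\in l\star z_i$ gives $l\in z_{i+1}\star z_i^{-1}$, and then $z_i^{-1}\in z_{i+1}^{-1}\star l$. Neither step uses that $L$ is closed under inverses; that hypothesis is needed only to get absorptivity of $L$. The identity $z\in x\star y\Rightarrow z^{-1}\in y^{-1}\star x^{-1}$ also holds in every mosaic, by three applications of reversibility. You should drop the fallback of closing $\equiv$ under inversion, since that would change the relation from the one in the statement.
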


\begin{proof}
The first set of claims for $M$ a general mosaic or semimosaic follow from ~\cite[Corollary~4.2, Lemma~3.10]{NakamuraReyes}. 

Now, suppose $M$ is an associative mosaic i.e. a hypergroup. First, if $y\in L\star x\star L$, then $y\in l\star x\star l'$ for some $l, l' \in L$. This implies that $y\in c \star l'$ for some $c\in l\star x$. Thus, $x\equiv y$ by the definition of $\equiv$ above. Conversely, suppose that $x\equiv y$. Since $e\in L$, $z_{i+1} \in z_i \star L \cup L\star z_i$ implies that $z_{i+1} \in L\star z_i\star L$, and $z_{i} \in z_{i+1} \star L \cup L\star z_{i+1}$ implies that $z_i \in L \star z_{i+1} \star L$. Moreover, if $z_{i+1} \in L\star z_i \star L$, then $z_{i+1} \in l\star z_i \star l'$ for some $l, l' \in L$. Therefore, $z_{i+1} \in c \star l'$ for some $c\in l \star z_i$. By reversibility, $z_i \in l^{-1} \star c$ and $c\in z_{i+1} \star l'^{-1}$. So, $z_i \in l^{-1} \star z_{i+1} \star l'^{-1} \subseteq L\star z_{i+1} \star L$. Thus, we have shown that if $x\equiv y$, then there is a sequence $x = z_1, z_2, \dots, z_n = y$ of elements in $M$ such that $ z_{i+1} \in L\star z_i \star L$ for $i = 1, \dots, n-1$. Associativity and the fact that $L\star L\subseteq L$ then give us that $y\in L\star x\star L$. We conclude that $x\equiv y$ iff $y\in L\star x\star L$. This implies that the equivalence class of $x$ is the double coset $L\star x \star L$ containing $x$.
In this case, the hyperoperation on $M/L$ can thus be written as:
\begin{align*}
    (L\star x\star L) \ostar (L\star y \star L) &= \{L\star z \star L : z\in (L\star x\star L) \star (L\star y \star L)\} \\
    &= \{L\star z \star L : z\in L\star x\star L\star y \star L\}. \qedhere
\end{align*}
\end{proof}

\section{Effective congruences on mosaics and semimosaics}
\label{sec:congruences}

Because regular and normal epimorphisms do not coincide for (semi)mosaics, we require a more careful kind of quotient construction to describe regular epimorphisms. 
As discussed in Section~\ref{sec:intro}, this is achieved by considering \emph{congruences} on mosaics and semimosaics, similar to what is done in universal algebra. 
An important difference is that we must characterize \emph{effective} congruences among all congruences.

\subsection{Congruences}

We first characterize congruences in the categories of mosaics and semimosiacs. These are described in terms of the set-theoretic image of the subobjects.

\begin{lemma}\label{lem:congruences}
    Let $M$ be a (semi)mosaic. Then, the congruences on $M$ are precisely the sub(semi)mosaics of $M\times M$ whose underlying sets are equivalence relations on $M$.
\end{lemma}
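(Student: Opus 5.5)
We must show two inclusions: every congruence is, up to isomorphism of subobjects, a sub(semi)mosaic of $M\times M$ whose underlying set is an equivalence relation, and conversely every such subobject is a congruence. Since monomorphisms in $\SMsc$ and $\Msc$ are the injective morphisms, a relation $r\colon R\hookrightarrow M\times M$ is determined up to isomorphism by a sub(semi)mosaic structure on the subset $r(R)\subseteq M\times M$ (with whatever hyperoperation makes the inclusion a morphism). So we identify $R$ with such a subobject, and the task is to show that $R$ is reflexive, symmetric and transitive in the categorical sense exactly when the set $r(R)$ is an equivalence relation on $M$.

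The key tool is the ``representable'' characterization recalled in Section~\ref{subsection:regularcat}: $R$ is a congruence iff each $R_X$ is an equivalence relation on $\C(X,M)$. One direction uses a test object that detects elements. In $\SMsc$ and $\Msc$, take $X$ to be the free (semi)mosaic on one generator, which exists since the forgetful functor to $\Set$ has a left adjoint (for $\SMsc$, I would check directly that $\{e,g\}$ with all products of $g$ with itself empty is free on $g$; for $\Msc$, use $\{e,g,g^{-1}\}$ with only $g\star g^{-1}\ni e$ nonempty, as in the mosaic $D$). Then $\C(X,M)\cong M$ naturally, and $R_X$ is precisely the set $r(R)$ as a relation on $M$. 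Hence if $R$ is a congruence, $r(R)$ is an equivalence relation.

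For the converse, suppose $r(R)$ is an equivalence relation and $R$ is a sub(semi)mosaic of $M\times M$. Rather than testing all $X$, I would verify the three categorical conditions directly. Reflexivity: the diagonal $\Delta\colon M\to M\times M$ is a morphism and lands in $r(R)$ set-theoretically, but we must check that it factors through $R$ as a morphism, i.e.\ $(x,x)\star_R(y,y)\supseteq \Delta(x\star y)$. This is the main obstacle: a sub(semi)mosaic may carry a smaller hyperoperation than the one induced from $M\times M$, so set-theoretic containment does not automatically give morphism factorizations. I expect that the intended reading of ``precisely'' resolves this, so that the subobjects arising as congruences are those for which these factorizations hold. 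Concretely, I would try to show that if $R$ is a congruence then $\Delta$, $\sigma|_R$, and the transitivity map factor through $R$, and that conversely these factorizations are exactly what being a sub(semi)mosaic with equivalence-relation underlying set plus the morphism conditions give. Symmetry is the cleaner case: $\sigma$ is an automorphism of $M\times M$, and $\sigma\circ r$ having the same image as $r$ means $\sigma$ restricts to a bijection $R\to R$, and one compares hyperoperations via $\sigma$. Transitivity: compute $R\times_M R$ using the explicit construction of pullbacks in $\Msc$ from \cite{NakamuraReyes}, namely the set-theoretic pullback with hyperoperation given componentwise and intersected. Then check that $(a,c)$, for $(a,b),(b,c)\in R$, gives a morphism into $R$.

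If the factorization issue genuinely fails for a non-regular hyperoperation on $R$, I would fall back on proving the statement for the hyperoperation forced by the congruence conditions. In that case I would state carefully which hyperoperation on $R$ is meant, using the computation of $R\times_M R$ to pin it down.
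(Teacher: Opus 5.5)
Your forward direction is correct, and more explicit than the paper's ``one can check from the definition''. Testing against the free (semi)mosaic $X$ on one generator gives $\C(X,M)\cong M$ and identifies $R_X$ with the set $r(R)$, so every congruence has an equivalence relation as its underlying set.

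The converse, however, is not proved: you name the obstacle and then defer to an ``intended reading''. That obstacle is fatal. Once $R$ may carry a hyperoperation smaller than the induced one, the converse is false, and the paper's one-line argument overlooks exactly this point. Take $M=\K$ and $R=\Z_2\times\Z_2$, viewed as a submosaic of $\K\times\K$; the inclusion is a morphism because $a+_{\Z_2}b\in a+_{\K}b$. The underlying set of $R$ is the universal equivalence relation on $\K$. But the only candidate factorization of the diagonal through $R$, namely $x\mapsto(x,x)$, is not a morphism $\K\to\Z_2\times\Z_2$: it sends $1+1=\{0,1\}$ to $\{(0,0),(1,1)\}\not\subseteq(1,1)+(1,1)=\{(0,0)\}$. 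So $R$ is not reflexive, hence not a congruence, in either $\SMsc$ or $\Msc$. (The paper's subsequent example calls this very $R$ a congruence. The relation in Example~\ref{ex:not exact} fails for the same reason, since $(1,1)\boxplus(1,1)=\varnothing$ does not contain $(2,2)$.)

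Transitivity can also fail on its own. On $M=\Z_2$, give the set $\Z_2\times\Z_2$ the hyperoperation with identity $(0,0)$, $x\star_R x=\{(0,0)\}$, and all other products empty. This is a reflexive and symmetric submosaic. Yet the map $R\times_M R\to R$, $((a,b),(b,c))\mapsto(a,c)$, sends the element $((0,0),(0,1))$ of $((1,0),(0,0))\star((1,0),(0,1))$ to $(0,1)\notin(1,0)\star_R(1,1)=\varnothing$.

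What your three checks actually support is the corrected statement. A sub(semi)mosaic $R\subseteq M\times M$ is a congruence if and only if three maps are well-defined morphisms: the diagonal $M\to R$, the swap $R\to R$, and $((a,b),(b,c))\mapsto(a,c)$ from $R\times_M R$ to $R$. This in particular forces $R$ to be an equivalence relation as a set. For a \emph{regular} sub(semi)mosaic these conditions hold automatically, because $\star_R=(\star_{M\times M})\cap R$ is the largest hyperoperation compatible with the inclusion. For instance, $(x,x)\star_R(y,y)=((x\star y)\times(x\star y))\cap R\supseteq\Delta(x\star y)$. Likewise, if $(a'',b'')\in(a,b)\star_R(a',b')$ and $(b'',c'')\in(b,c)\star_R(b',c')$, then $(a'',c'')\in((a\star a')\times(c\star c'))\cap R=(a,c)\star_R(a',c')$ by set-theoretic transitivity. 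That regular version is all that Theorem~\ref{thm:effective congruence} uses. So your proof should restrict the converse to regular sub(semi)mosaics, or impose the three morphism conditions explicitly, and record a counterexample such as the one above.
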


\begin{proof}
     By Remark 2.5, the forgetful functors from $\SMsc$ and $\Msc$ to $\Set$ preserve products and pullbacks. Moreover, the monomorphisms in $\SMsc$ and $\Msc$ are precisely the injective morphisms. Then, one can check from the definition of congruence, that given a congruence $i: R \hookrightarrow M\times M$ on a (semi)mosaic $M$, its image $i(R)\subseteq M\times M$ is a set-theoretic equivalence relation on $M$, and given a sub(semi)mosaic $R \subseteq M\times M$ that is a set-theoretic equivalence relation on $M$, the inclusion map $i:R\hookrightarrow M\times M$ gives a congruence on $M$.
\end{proof}

\begin{remark}
    Every equivalence relation on a semimosaic is the underlying set of some congruence on it, but this is not true, in general, for a mosaic. If $R$ is an equivalence relation on a semimosaic $M$, reflexivity implies that the identity $(e_M, e_M) \in M \times M$ lies in $R$, so that $R$ can be made into a subobject of $M\times M$ by giving it, for instance, the regular subsemimosaic structure. This would make $R$ a congruence on $M$ in $\SMsc$ by Lemma~\ref{lem:congruences}. 
    
    But, if $M$ is a mosaic, then $R$ might not be closed under inverses as a subset of the mosaic $M\times M$ and hence, might not be the underlying set of any subobject of $M\times M$ in $\Msc$. 
For example, take the sign hypergroup $\mathbb{S}=\{0, \pm1\}$ and consider $R:=\Delta\cup\{(0, -1), (-1, 0)\}$ where $\Delta\subset\mathbb{S}\times\mathbb{S}$ is the diagonal set. Suppose that the subset $R$ has a mosaic structure such that the inclusion $R\hookrightarrow\mathbb{S}\times\mathbb{S}$ is a mosaic homomorphism. Then $(0,0) \in R$ is the identity element and $-(1, 1)=(-1, -1)$ since mosaic homomorphisms preserves identity elements and inverses. Thus we have $-(0, -1)=(0, 1)$. This contradicts that the inclusion is a mosaic homomorphism. 
\end{remark}

  Given a congruence \[
 i \colon R \hookrightarrow M \times M
 \]
 on a (semi)mosaic
$M$, the categorical quotient $p$ of $M$ by $R$ exists in $\sMsc$ by ~\cite[Theorem~3.11]{NakamuraReyes} but as described in the proof of that result, its underlying set may not coincide with the set-theoretic quotient of $M$ by $R$ but rather a further quotient hypermagma. We reproduce ~\cite[Example~3.6]{NakamuraReyes} that illustrates this.

\begin{example}
     Let $D$ be the commutative semimosaic on the set $\{0, 1, 2\}$ with $0$ as the identity, and with $1+1=1+2=2+2=\{0, 1, 2\}$. Let $F$ be the free commutative semimosaic on $\{0, 1, 2\}$ with 0 as the identity. Let $f,g:F\to D$ be semimosaic morphisms defined by $f(1)=1$, $f(2)=2$, and $g(1)=0$, $g(2)=2$. Then, the set-theoretic quotient of $f$ and $g$ is $E:=\{[0], [2]\}$. Let $p:D\to E$ be the quotient map. Suppose there was some hyperoperation on $E$ that made $E$ into a semimosaic and $p$ into a morphism of semimosaics. Then, 
    $$E = p(\{0, 1, 2\})=p(1+1)\subseteq p(1)+p(1) = [0]+[0] = [0]$$
    which is not possible. Thus, $E$ cannot be the underlying set of the quotient of $f$ and $g$ in $\SMsc$. Obtaining the quotient will involve further identifying $[0]$ and $[2]$ in $E$.
\end{example}

\subsection{Effective congruences}

The following definition captures the ``good'' property that characterizes when the the set-theoretic quotient forms a (semi)mosaic in a natural way.

\begin{definition}\label{def:mosaic congruences}
Let $M$ be a semimosaic. Let $\equiv$ be an equivalence relation on $M$, and for each $x \in M$ let $[x]$ denote its equivalence class under $\equiv$. We say that $\equiv$ is \emph{semimosaic equivalence} if it satisfies the following condition for all $x,y \in M$:
\[
y \in x \star [e] \mbox{ or }y \in [e] \star x \implies [y] = [x].
\]
If $M$ is a mosaic, we say that $\equiv$ is a \emph{mosaic equivalence} if it is a semimisoaic equivalence that satisfies $[x^{-1}]=[x]^{-1}$ for all $x \in M$.
\end{definition}

Note that if $M$ is a mosaic, then a semimosaic equivalence on $M$ need not be a mosaic equivalence. The following example illustrates this.

\begin{example}
    Consider the free mosaic $F_2=\{e, a^{\pm 1}, b^{\pm 1}\}$ generated by two elements $a$ and $b$. Then 
$$R:=\Delta\cup\{(a, b^{-1}), (b^{-1}, a)\}$$
 where $\Delta$ is the diagonal set is a semimosaic equivalence. However, $[a^{-1}]=\{a^{-1}\}$ whereas $[a]^{-1}=\{a, b^{-1}\}^{-1}=\{a^{-1}, b\}$.
\end{example}

\begin{proposition}\label{prop:congruences}
Let $M$ be a (semi)mosaic and $r:R\hookrightarrow M\times M$ be a congruence on $M$. Then, the quotient of $M$ by $R$ in $\sMsc$ has underlying set equal to the set-theoretic quotient of $M$ by $R$ iff $R$ is a (semi)mosaic equivalence on $M$.
\end{proposition}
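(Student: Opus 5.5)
The plan is to compare the categorical quotient with the set-theoretic one. Let $q \colon M \to M/\!\!\equiv$ be the set-theoretic quotient, where $\equiv$ is the equivalence relation underlying $R$ (Lemma~\ref{lem:congruences}). By the construction in~\cite[Theorem~3.11]{NakamuraReyes}, the coequalizer of $r_1,r_2$ in $\sMsc$ factors through $q$. Its underlying set is $M/\!\!\equiv$ exactly when $M/\!\!\equiv$ carries a (semi)mosaic structure making $q$ a morphism. In that case the coequalizer is $q$ with the short structure $a \star b = q(q^{-1}(a) \star q^{-1}(b))$. This structure is initial: any hyperoperation making $q$ a morphism must contain these products, and the universal property forces the smallest one. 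So I would reduce the statement to the following claim: the hyperoperation $a\star b := q(q^{-1}(a)\star q^{-1}(b))$, with identity $[e]$ (and inverse $[x]^{-1}:=[x^{-1}]$ in the mosaic case), gives a (semi)mosaic iff $\equiv$ is a (semi)mosaic equivalence. The universal property then follows routinely. Any morphism $g$ coequalizing $r_1,r_2$ is constant on classes and satisfies $g(q^{-1}(a)\star q^{-1}(b)) \subseteq g(q^{-1}(a))\star g(q^{-1}(b))$, so it factors through $q$ as a morphism.

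For the forward direction, suppose the quotient has underlying set $M/\!\!\equiv$, so $q$ is a morphism to some (semi)mosaic structure with identity $q(e)=[e]$. If $y \in x\star e'$ with $e'\equiv e$, then $[y] \in q(x\star e') \subseteq [x]\star[e] = \{[x]\}$, so $[y]=[x]$, and symmetrically for $e'\star x$. This is the semimosaic equivalence condition. In the mosaic case, morphisms preserve inverses, so $[x^{-1}] = q(x^{-1}) = q(x)^{-1}$ depends only on $[x]$. Hence $x\equiv y$ implies $x^{-1}\equiv y^{-1}$, which is the condition $[x^{-1}]=[x]^{-1}$ read setwise.

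For the converse, I would check the axioms of the short structure.

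\textbf{Identity.} We have $[x]\star[e] = q(q^{-1}[x]\star[e])$. It contains $[x]$ because $x\in x\star e$. Every element $y \in x'\star e'$ with $x'\equiv x$ and $e' \equiv e$ satisfies $[y]=[x']=[x]$ by the semimosaic equivalence condition. So $[x]\star[e]=\{[x]\}$, and symmetrically $[e]\star[x]=\{[x]\}$. This gives the semimosaic case.

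\textbf{Reversibility.} Inversion $[x]\mapsto[x^{-1}]$ is well defined by the hypothesis. Suppose $[z]\in[x]\star[y]$, so $z'\in x'\star y'$ with $z'\equiv z$, $x'\equiv x$ and $y'\equiv y$. Reversibility in $M$ gives $x'\in z'\star y'^{-1}$, so $[x]\in[z]\star[y^{-1}]$. Likewise $[y]\in[x^{-1}]\star[z]$.

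\textbf{Obstacle.} The main subtlety I expect is the reduction step, in particular justifying from the explicit construction of~\cite[Theorem~3.11]{NakamuraReyes} that "underlying set equals $M/\!\!\equiv$" is equivalent to the existence of some structure on $M/\!\!\equiv$ making $q$ a morphism. I would handle this by noting that the coequalizer is the reflection of the set quotient, obtained by further identifying classes until a (semi)mosaic results. If no identification is needed, it is exactly $q$ with the short structure. Conversely, if any compatible structure exists, the short structure is one, and then no further identification is needed.
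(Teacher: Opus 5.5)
Your proposal is correct and follows essentially the same route as the paper. The converse shows that the short hyperoperation $[x]\star[y]=q(q^{-1}([x])\star q^{-1}([y]))$ has identity $[e]$ by the semimosaic-equivalence condition and inherits reversibility from $M$. The forward direction uses $[y]\in q(x\star e')\subseteq[x]\star[e]=\{[x]\}$, exactly as in the paper.

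The remaining differences are cosmetic:
\begin{itemize}
\item You verify the universal property directly, where the paper invokes the hypermagma coequalizer.
\item You get the inverse condition from $q$ preserving inverses, where the paper uses that $R$ is closed under inversion in $M\times M$.
\item Your preliminary reduction step is valid but not needed.
\end{itemize}
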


\begin{proof}
    ($\impliedby$) Let $M$ be a semimosaic and $(r_1, r_2) = r:R\hookrightarrow M\times M$ be a congruence on $M$ that is also a semimosaic equivalence. Let $M/R$ be the quotient of $M$ by $R$ in $\Set$ and $p \colon M \twoheadrightarrow M/R$ be the quotient map that sends $x$ to its equivalence class $[x]$ under $R$. We wish to show that $p$ is the coequalizer of $r_1$ and $r_2$ in $\SMsc$ under some hyperoperation on $M/R$. By ~\cite[Proposition~3.3]{NakamuraReyes}, $p$ is the coequalizer of $r_1$ and $r_2$ in $\HMag$ under the following hyperoperation on $M/R$:
    \begin{align}
    \begin{split}
    [x] \star [y] &:= p(p^{-1}(x) \star p^{-1}(y)) \\
    &= \{[z] \mid z \in x' \star y' \mbox{for some } x' \in [x], y' \in [y]\}.
    \end{split}\label{eq:quotient product}
    \end{align}

We claim that $p(e) = [e]$ acts as the identity in $M/R$ under this hyperoperation. It is clear from~\eqref{eq:quotient product} that $[x]\in [x]\star[e] \cap [e]\star [x]$. On the other hand, if $[y] \in [x] \star[e] \cup [e]\star[x]$, then b~\eqref{eq:quotient product} we have $y' \in [y]$ and $x' \in [x]$ such that $y' \in x' \star [e] \cup [e] \star x'$. The semimosaic equivalence property of $R$ implies that $[y] = [y'] = [x'] = [x]$. So, we indeed have $[x] \star [e] = [x] = [e]\star[x]$. Thus~\eqref{eq:quotient product} turns $M/R$ into an object and $p$ into a morphism in $\SMsc$. Now, suppose that $p':M\to N$ is a morphism in $\SMsc$ such that $p'\circ r_1 = p'\circ r_2$. By the universal property of the coequalizer, there exists a unique morphism of \textit{hypermagmas} $q: M/R\to N$ such that $q\circ p = p'$. Thus, $q(p(e)) = p'(e)$ which, since $p$ and $p'$ are both morphisms of semimosaics, implies that $q([e]) = e$. Thus, $q$ is also a morphism of \textit{semimosaics}, giving us that $p$ is the coequalizer in $\SMsc$ of $r_1$ and $r_2$.

Now, let $M$ be a mosaic and assume that $R$ is a mosaic equivalence. Then, we claim that~\eqref{eq:quotient product} turns $M/R$ too into a mosaic with the inverse of $[x]$ being $[x^{-1}]$. Clearly, $[e]\in [x]\star[x^{-1}] \cap [x^{-1}]\star [x]$. We need to show reversibility. Let $x, y, z \in M$ be such that $[x]\in [y]\star[z]$ . By~\eqref{eq:quotient product}, there exist $x' \equiv_R x$, $y'\sim_R y$ and $z'\equiv_R z$ such that $x'\in y'\star z'$. By reversibility in $M$, $y'\in x' \star z'^{-1}$ which, since $R$ is a mosaic equivalence, implies that $[y]\in [x] \star [z^{-1}]$ (the proof for $[z]\in [y^{-1}]\star [x]$ is similar). Thus, under~\eqref{eq:quotient product}, $M/R$ is an object and $p$ a morphism in $\Msc$. Moreover, the same proof as above for why $p$ is the coequalizer of $r_1$ and $r_2$ in $\Msc$ applies because $\Msc$ is a full subcategory of $\SMsc$.

($\implies$) Now, let $M$ be a semimosaic and $(r_1, r_2)=r:R\hookrightarrow M\times M$ be a congruence on $M$ such that the quotient $M/R$ of $M$ by $R$ in $\SMsc$ has underlying set equal to the set-theoretic quotient of $M$ by $R$. Let $p:M\twoheadrightarrow M/R$ be the quotient map in $\SMsc$ and given $x\in M$, let $[x]$ be the (set-theoretic) equivalence class under $R$ of $x$. Then, the hypothesis implies that $p(x)=[x]$. Let $x, y\in M$ be such that $y\in x\star[e]$. We wish to show that $[y]=[x]$. We have $y\in x\star e'$ for some $e' \in [e]$. Applying $p$, we get
$$[y]=p(y) \in p(x\star e') \subseteq p(x)\cdot p(e') = [x]\cdot [e] = [x],$$
where $\cdot$ is the hyperoperation on $M/R$. The proof for the $y\in [e]\star x$ case is similar. 

If $M$ is a mosaic and $R$ is a congruence in $\Msc$, then we need to additionally show that $R$ is closed under inversion as a subset of $M\times M$. But, that is automatic as $R$ is, by assumption, taken to be a submosaic of the mosaic $M\times M$.
\end{proof}

The following example describes two congruences on $\K$ in $\SMsc$ whose quotients have underlying set equal to the set-theoretic quotient by the \textit{same} semimosaic equivalence. One of the congruences is effective while the other is not. It turns out that the effective congruence corresponds to a regular subsemimosaic of $\K\times \K$ while the non-effective one does not. This will motivate the next result that further characterizes the effective congruences in $\Msc$ and $\SMsc$.

\begin{example}
    The identity map $(i_1, i_2)=i:\Z_2\times \Z_2 \hookrightarrow \K\times \K$ is a congruence on $\K$ by Proposition~\ref{prop:congruences}. The equivalence relation on $\K$ here is the relation where all of $\K$ forms a class and, thus, is trivially a semimosaic equivalence. Let $p:\K\to \{0\}$ be the quotient map in $\Set$. Then, under ~\eqref{eq:quotient product}, $\{0\}$ is the semimosaic on one element and $p$ the trivial morphism. Moreover, $p$ is indeed the coequalizer of $i_1$ and $i_2$ in $\SMsc$. Note, however, that the congruence is not effective. Suppose it was the kernel pair of some morphism $f:\K\to N$ in $\SMsc$. Then, $f\circ i_1 = f\circ i_2$. But, as set maps, $i_1=i_1'$ and $i_2=i_2'$ where $(i_1', i_2')=i':\K\times \K \to \K\times \K$ is the identity map. Thus, $f\circ i_1' = f\circ i_2'$. By the universal property of the kernel pair, there exists a unique $g:\K\times \K \to \Z_2\times \Z_2$ in $\SMsc$ such that $i_j\circ g = i_j'$ for $j=1, 2$. Thus, $g$ must be the identity map from $\K\times \K$ to $\Z_2\times \Z_2$, but that is not a morphism in $\SMsc$. 
    
     Note also that $\Z_2 \times \Z_2$ is not a regular subsemimosaic of $\K \times \K$. This is because $(1, 1)+(1,1)$ in $\Z_2\times \Z_2$ is  $(0, 0)$ but $(1, 1)+ (1,1)$ in $\K\times \K$ is $\{0, 1\}\times \{0, 1\}$, which intersected with $\Z_2\times \Z_2$ as a set remains $\{0, 1\} \times \{0, 1\}$. $i:\K\times \K \to \K\times \K$, on the other hand, \textit{is} an effective congruence on $\K$ (it is the kernel pair of $p$ above) with the same corresponding semimosaic equivalence as above. $\K\times \K$ is also the unique regular subsemimosaic on the subset $\{0, 1\} \times \{0, 1\} \subseteq \K\times \K$. This motivates the following result.        
     \end{example}

    \begin{theorem}\label{thm:effective congruence}
        Let $M$ be a semimosaic (resp., mosaic). The effective congruences on $M$ are exactly the regular sub(semi)mosaics  $R \subseteq M \times M$ that are (semi)mosaic equivalences on $M$.
    \end{theorem}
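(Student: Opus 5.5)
We prove the two inclusions separately. Throughout we use that the forgetful functor to $\Set$ preserves products and pullbacks, and that monomorphisms are injective morphisms.

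\emph{Effective $\Rightarrow$ regular and equivalence.} Suppose $R$ is effective, say $R=R_f$ for a morphism $f\colon M\to N$. The pullback $R_f$ in $\sMsc$ is computed as the subset $\{(x,y): f(x)=f(y)\}\subseteq M\times M$. Its hyperoperation is the one making the inclusion coshort, i.e.\ $R_f$ carries the regular sub(semi)mosaic structure. To confirm this, note that the regular sub(semi)mosaic on that subset has the right universal property:
\begin{itemize}
\item[(a)] a pair $g_1,g_2\colon X\to M$ with $fg_1=fg_2$ lands setwise in $R_f$;
\item[(b)] $(g_1,g_2)(x\star y)\subseteq ((g_1,g_2)(x)\star(g_1,g_2)(y))\cap R_f$, which is exactly the regular product.
\end{itemize}
In the mosaic case we also need $R_f$ closed under inversion, which holds since $f$ preserves inverses. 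So $R$ is a regular subobject and a set-theoretic equivalence relation.

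Next, by the discussion in Section~\ref{sec:intro}, since $\sMsc$ is regular, the quotient of $R$ exists and $R=R_q$ for the quotient $q$. It remains to show the equivalence is a (semi)mosaic equivalence. Suppose $y\in x\star e'$ with $e'\equiv e$. Then
\[
f(y)\in f(x)\star f(e')=f(x)\star e=\{f(x)\},
\]
so $(x,y)\in R_f$. The case $y\in[e]\star x$ is symmetric. For inverses in the mosaic case, $f(x)=f(y)$ implies $f(x^{-1})=f(x)^{-1}=f(y^{-1})$, so $[x^{-1}]=[x]^{-1}$. (Alternatively, Proposition~\ref{prop:congruences} could be used here.)

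\emph{Regular and equivalence $\Rightarrow$ effective.} Let $R$ be a regular sub(semi)mosaic of $M\times M$ whose underlying set is a (semi)mosaic equivalence. By Lemma~\ref{lem:congruences}, $R$ is a congruence. By Proposition~\ref{prop:congruences}, its coequalizer $p\colon M\to M/R$ is the set-theoretic quotient, with hyperoperation~\eqref{eq:quotient product}.

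We claim $R=R_p$. As sets, the kernel pair $R_p$ is $\{(x,y):[x]=[y]\}=R$. By the first part, $R_p$ carries the regular sub(semi)mosaic structure on this subset. Since a regular substructure is uniquely determined by its underlying subset (its product is forced to be $(a\star b)\cap R$), we get $R=R_p$ as subobjects, and hence $R$ is effective.

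\emph{Main obstacle.} The key step is verifying carefully that pullbacks in $\sMsc$ are computed on underlying sets with the induced regular structure, so that kernel pairs are exactly regular subobjects. If the cited results from \cite{NakamuraReyes} only give the underlying set, we check the universal property directly via (a) and (b) above, including that the identity $(e,e)$ lies in $R_f$ and that $R_f$ is closed under inverses in the mosaic case.
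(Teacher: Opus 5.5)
Your proof is correct. The core ingredient is the same as in the paper: a map $s$ into $M\times M$ with image in $R$ factors through $R$ as a morphism exactly because $s(a\star b)\subseteq (s(a)\star s(b))\cap R$, i.e., because $R$ carries the regular structure. The organization differs, however.

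\emph{The paper.} It performs this check only for the specific quotient $p\colon M\to M/R$, showing that the regular substructure is the equalizer of $p\circ q_1$ and $p\circ q_2$. For the converse it cites \cite[Theorem~3.14]{NakamuraReyes} (equalizers are coshort) and then routes through Proposition~\ref{prop:congruences}.

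\emph{Your route.} You prove once and for all that the kernel pair of any morphism $f$ is the regular substructure on $\{(x,y): f(x)=f(y)\}$. Both directions then follow:
\begin{enumerate}
\item Effectiveness of a regular (semi)mosaic equivalence $R$ follows by applying this to $f=p$ and using uniqueness of regular structures.
\item For an effective $R=R_f$, the equivalence condition follows by applying $f$ directly: $f(y)\in f(x)\star f(e')=\{f(x)\}$, together with preservation of inverses.
\end{enumerate}

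\emph{What this buys.} Your argument is more self-contained. It also sidesteps a step in the paper's converse that needs repair. The paper asserts that the given $f$ with $R=R_f$ is the coequalizer of $r_1,r_2$, but that holds only when $f$ is a regular epimorphism. The fix is to pass to the quotient $q$ of $R$: one has $R=R_q$, and $q$ is surjective since short morphisms are. Only then does Proposition~\ref{prop:congruences} apply.

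Your sentence invoking regularity to get $R=R_q$ is never used and can be dropped.
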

    \begin{proof}
        We give the proof for semimosaics and that for mosaics will be the same. Let $M$ be a semimosaic and $R\subseteq M\times M$ be a semimosaic equivalence on $M$. Give $R$ the regular subsemimosaic structure obtained from $M\times M$. Let $(r_1, r_2)=r:R\hookrightarrow M\times M$ be the inclusion map, with $r_1$ and $r_2$ being the projections onto the first and second components of members of $R$ respectively. Let $p:M\twoheadrightarrow M/R$ be the quotient of $M$ by $R$ in $\SMsc$. We claim that $r_1, r_2$ is the kernel pair of $p$ in $\SMsc$. This is equivalent to saying that $r$ is the equalizer of $p\circ q_1$ and $p\circ q_2$ where $q_1, q_2: M\times M \to M$ are the projections onto the first and second components of members of $M$ respectively. Now, $r$ is the equalizer in $\Set$ of $p\circ q_1$ and $p\circ q_2$ because $M/R$ has underlying set equal to the set-theoretic quotient of $M$ by $R$ by Proposition~\ref{prop:congruences}. Suppose $s:S\to M\times M$ is a morphism in $\SMsc$ such that $(p\circ q_1) \circ s = (p\circ q_2)\circ s$. Then, by the universal property of the equalizer, there exists a unique \textit{set} map $t:S\to R$ such that $s=r\circ t$. We wish to show that $t$ is also a morphism in $\SMsc$. Let $a, b\in S$. Since $r$ is the inclusion, $s=r\circ t$ implies that the sets $t(a\cdot b) = s(a\cdot b)$ and $t(a)\star_R t(b) = s(a)\star_R s(b)$. Thus, it suffices to show that $s(a\cdot b) \subseteq s(a)\star_R s(b)$. Now, $s(a\cdot b) \subseteq s(a)\star_M s(b)$ since $s$ is a morphism in $\SMsc$, and $s(a\cdot b)\subseteq R$ as $s(S)\subseteq R$. Thus, $s(a\cdot b) \subseteq s(a)\star_M s(b) \cap R = s(a) \star_R s(b)$ by the regular subsemimosaic structure of $R$. Therefore, $t$ is indeed a morphism in $\SMsc$, which implies that $r$ is the equalizer of $p\circ q_1$ and $p\circ q_2$ in $\SMsc$. We have shown that $r$ is an effective congruence in $\SMsc$.
        
        Now, let $(r_1, r_2)=r:R\hookrightarrow M$ be a congruence on $M$ such that it is the kernel pair of some $f:M\to N$. Then, $r$ is an equalizer and, by~\cite[Theorem~3.14]{NakamuraReyes}, coshort. Thus, by \cite{NakamuraReyes} again, $R$ is isomorphic to the regular subsemimosaic $r(R)\subseteq M$. Moreover, by the discussion in \ref{subsection:regularcat}, $f$ is the coequalizer of $r_1$ and $r_2$ in both $\SMsc$ and $\Set$. Thus, by Proposition~\ref{prop:congruences}, $R$ is a semimosaic equivalence on $M$.        
    \end{proof}

\begin{corollary}\label{cor:mosaic quotients}
Let $M$ be a semimosaic (resp., mosaic). 
\begin{enumerate}
\item There is a bijective correspondence between
	\begin{itemize}
    \item effective congruences on $M$;
    \item semimosaic (resp., mosaic) equivalence relations on $M$;
	\item quotients objects of $M$ (i.e., isomorphism classes of regular epimorphisms with domain $M$);
	\end{itemize}
\item Every morphism $f \colon M \to N$ of (semi)mosaics factors uniquely as $f = i \circ p$, where $p \colon M \to M/R$ is the quotient of $M$ by an effective congruence, and $i \colon M/R \to N$ is an injective morphism.
\end{enumerate}
\end{corollary}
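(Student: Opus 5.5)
For part (1), I would compose two bijections. The correspondence between effective congruences and quotient objects is the general fact \eqref{eq:quotient correspondence}. It holds because $\SMsc$ and $\Msc$ are regular, by \cite[Theorem~1.2]{NakamuraReyes}, so every effective congruence has a quotient and every regular epimorphism is the coequalizer of its kernel pair. Here effective congruences are taken up to isomorphism of subobjects of $M\times M$.

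For the bijection between effective congruences and (semi)mosaic equivalences, I would use Theorem~\ref{thm:effective congruence}. The forward map sends an effective congruence $r\colon R\hookrightarrow M\times M$ to its underlying set $r(R)$. Theorem~\ref{thm:effective congruence} says this set is a (semi)mosaic equivalence, and that $R$ is isomorphic, as a subobject, to the regular sub(semi)mosaic on $r(R)$. The map is injective on isomorphism classes because a regular sub(semi)mosaic structure on a fixed subset is unique, namely $x\star_R y=(x\star y)\cap R$. It is surjective because Theorem~\ref{thm:effective congruence} also shows that each (semi)mosaic equivalence, given the regular structure, is an effective congruence.

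In the mosaic case I must also check that this regular structure really is a submosaic, i.e.\ that the subset is closed under inversion and contains $(e,e)$. Containing $(e,e)$ is reflexivity. Closure under inversion is exactly the condition $[x^{-1}]=[x]^{-1}$ applied componentwise: $(x,y)\in R$ gives $(x^{-1},y^{-1})\in R$. I expect this to be implicit in the theorem, but I will state it. To make the composite bijection explicit, I will invoke Proposition~\ref{prop:congruences}: the quotient corresponding to an equivalence $\equiv$ is the set-theoretic quotient $M/\!\!\equiv$ with hyperoperation \eqref{eq:quotient product}.

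For part (2), I would use the regular image factorization \eqref{eq:epimono}. Given $f\colon M\to N$, take its kernel pair $R_f$, which is effective by definition, and let $p\colon M\to M/R_f$ be its quotient; $f$ factors through $p$ as $f=i\circ p$. To see that $i$ is injective, by Proposition~\ref{prop:congruences} and Theorem~\ref{thm:effective congruence} the underlying set of $M/R_f$ is the set-theoretic quotient by the relation $f(x)=f(y)$. Hence $i([x])=f(x)$ is injective, and injective morphisms are monomorphisms.

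For uniqueness, suppose $f=i'\circ p'$ with $p'$ the quotient by an effective congruence $R'$ and $i'$ injective. Then $p'(x)=p'(y)$ if and only if $f(x)=f(y)$, so $R'$ and $R_f$ have the same underlying set. By the uniqueness argument in part (1), $R'=R_f$ as subobjects. So $p'=p$ up to the canonical isomorphism, and then $i'=i$ because $p$ is epi.

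The main obstacle is this uniqueness-up-to-isomorphism bookkeeping. In particular I must make sure that "regular sub(semi)mosaic" fixes the subobject once the subset is fixed, which is immediate from the definition $x\star_R y=(x\star y)\cap R$.
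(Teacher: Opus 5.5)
Your proposal is correct and follows essentially the paper's argument. Part (1) composes the regular-category correspondence \eqref{eq:quotient correspondence} with Theorem~\ref{thm:effective congruence} and the uniqueness of the regular sub(semi)mosaic structure on a fixed subset, and part (2) is the image factorization \eqref{eq:epimono} through the quotient by $R_f$. Your direct check that $i$ is injective, via the set-theoretic description of $M/R_f$, is only extra detail; the paper instead uses that $i$ is a monomorphism and that monomorphisms here are injective. The same goes for your explicit inverse-closure check in the mosaic case.
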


\begin{proof}
(1) The equivalence between effective congruences $R$ on an object $M$ and quotient objects of $M$ holds in every regular category as discussed in~\eqref{eq:quotient correspondence} above.
The equivalence between effective congruences and (semi)mosaic equivalence relations follows from Theorem~\ref{thm:effective congruence} and the fact that every subset of a semimosaic (resp., mosaic) that contains the identity (resp., and is closed under inverses) has a unique regular subsemimosaic (resp., submosaic) structure on it.

(2) Taking $R = R_f$ to be the kernel pair of $f$, this follows from~\eqref{eq:epimono} as discussed above.
\end{proof}

\begin{example}\label{ex:not exact}
The characterization above makes it straightforward to construct further examples of congruences on (semi)mosaics that are not effective, showing that the regular categories $\Msc$ and $\SMsc$ are not exact. 
For example, take the abelian group $M=\mathbb{Z}$ and define a hyperaddition on $R:=M\times M$ by
$$(a, b)\boxplus(c, d)=\left\{\begin{array}{llll}
(a, b) & \text{if}\ c=d=0\\
(c, d) & \text{if}\ a=b=0\\
(0, 0) & \text{if}\ c=-a, d=-b\\
\emptyset & \text{otherwise}
\end{array}\right.$$ 
Then $R$ is a congruence on $M$ that is not effective. Indeed, if it were effective, then the hypersum is always the componentwise sum.
\end{example}

\begin{remark}\label{rem:completeness}
Let $M$ be a (semi)mosaic. Let $E \subseteq \P(M \times M)$ denote the comlpete lattice of equivalence relations on $M$, and let $E \subseteq L$ denote the subset of (semi)mosaic equivalences on $M$.
It is evident that the set of (semi)mosaic equivalences on $M$ is closed under intersections in $\P(M \times M)$. The infimum operation of $E$ is given by intersection, so this shows that $L$ is a complete sub-meet-semilattice of $E$.

It follows by abstract order theory that $L$ is a complete lattice. However, it is not generally a sublattice of $E$. Indeed, the join of $E$ is given by the transitive closure of the union of equivalence relations.
Consider the commutative mosaic $M$ whose hyperoperation is given by the following table:
\begin{center}
\begin{tabular}{ |c|c|c|c|c|c| } 
\hline
$\star$ & $e$ & $a$ & $b$ & $c$ & $-c$ \\
\hline
$e$ & $e$ & $a$ & $b$ & $c$ & $-c$ \\ 
\hline
$a$ & $a$ & $0$ & $\emptyset$
& $\emptyset$ & $\emptyset$\\ 
\hline
$b$ & $b$ & $\emptyset$ & $0$
& $-c$ & $c$\\ 
\hline
$c$ & $c$ & $\emptyset$ & $-c$
& $\emptyset$ & $0$\\ 
\hline
$-c$ & $-c$ & $\emptyset$ & $c$
& $0$ & $\emptyset$\\ 
\hline
\end{tabular}
\end{center} 
Define equivalence relations $\equiv_1, \equiv_2$ on $M$ by the respective subsets
$$\Delta\cup\{(a, 0), (0, a)\},\ \Delta\cup\{(a, b), (b, a)\}$$
where $\Delta\subset M\times M$ is the diagonal set. These are mosaic congruences. Now we have 
$$b\equiv_2 a,\ a\equiv_1 0,\ -c\in b+c.$$
However, $c$ and $-c$ are not equivalent under the transitive closure of the union of $\equiv_1$ and $\equiv_2$.
\end{remark}

\subsection{Further (non-)exactness properties for mosaics}
\label{sub:exact}

The existence of non-effective congruences as in Example~\ref{ex:not exact} shows that the categories $\Msc$ and $\SMsc$ are not Barr exact. This is in contrast with the categories of groups and monoids, both of which are Barr exact.
In this section we discuss a few other ``exactness'' properties of categories and their relationships to mosaics and semimosiacs.
We begin with some properties that fail to hold for (semi)mosaics.

\begin{remark}\label{rem:malcev}
Another property of categories that lies strictly between regular and exact categories is that of finitely complete \emph{protomodular} categories~\cite{Bourn}. Without recalling the definition here, we recall from~\cite[Theorem~3.18]{BournGran} that finitely complete protomodular categories are \emph{Malcev}, meaning that every reflexive relation is a congruence. 
We can show that the (finitely complete) categories $\SMsc$ and $\Msc$ are not Malcev by considering the regular subobject
\[
R = \{(0, 0), (1, 1), (1, 0)\} \hookrightarrow \Z_2 \times \Z_2
\]
with the empty sum $(1,0) + (1,1) = \varnothing$. This is reflexive but not symmetric and therefore is not a congruence on $\Z_2$. 
\end{remark}

The notion of a proto-exact structure on a category was introduced by Dyckerhoff and Kapranov~\cite[\S 2.4]{DK} in order to provide a non-additive generalization of Quillen exact categories. Since then, proto-exact structures have been found in categories of matroids~\cite{EJS}, modules over semirings~\cite{JST}, and further generalizations thereof~\cite{JSW}.
We briefly recall the definition here.

A \emph{proto-exact category} $(\C, \mathfrak{M}, \mathfrak{E})$ is a pointed category $\C$ along with two classes $\mathfrak{M}$ and $\mathfrak{E}$ of morphisms in $\C$, respecitvely called the \emph{admissible monomorphisms} and \emph{admissible epimorphisms}, satisfying the following axioms:
\begin{itemize}
    \item $\mathfrak{M}$ and $\mathfrak{E}$ are closed under composition and contain all isomorphisms;
    \item All morphisms $0 \to X$ are in $\mathfrak{M}$ and all morphisms $X \to 0$ are in $\mathfrak{E}$;
    \item any commutative square
    \[
        \begin{tikzcd}
            A \ar[r, "i_1"] \ar[d, "p_1"] & B \ar[d, "p_2"] \\
            C \ar[r, "i_2"] & D
        \end{tikzcd}
    \]
    with $i_1, i_2 \in \mathfrak{M}$ and $p_1, p_2 \in \mathfrak{E}$ is a pullback if and only if it is a pushout;
    \item $\mathfrak{E}$ has pullbacks along $\mathfrak{M}$ and is closed under such pullbacks;
    \item $\mathfrak{M}$ has pushouts along $\mathfrak{E}$ and is closed under such pushouts.
\end{itemize}
These axioms imply that $\mathfrak{M}$ consists of normal monomorphisms and $\mathfrak{E}$ consists of normal epimorphisms. For instance, if $i_1 \colon A \to B$ is in $\mathfrak{M}$, then taking the pushout along $p_1 \colon A \to 0$ yields a bicartesian square as above where $C = 0$, from which it follows that $i_1 = \ker p_2$.
If we assume from the outset that $\mathfrak{M}$ and $\mathfrak{E}$ respectively contain only normal monomorphisms and normal epimorphisms, it is shown in~\cite[Lemmas~1.2]{Mozgovoy} that the third axiom above is redundant. In case $\C$ has kernels and cokernels, it also follows from~\cite[Lemma~1.3]{Mozgovoy} that the existence of the pullbacks and pushouts of the fourth and fifth axioms are automatic. 

Recently, Mozgovoy introduced the following notion of a category for which the classes of normal monomorphisms and normal epimorphisms yeild a proto-exact structure. We recall the defintion below. 

\begin{definition}
A pointed category $\catC$ is defined to be \emph{parabelian}~\cite{Mozgovoy} if it satisfies the following conditions:
\begin{enumerate}[label=(\roman*)]
    \item $\catC$ has kernels and cokernels; 
    \item  normal epimorphism  are closed under pullbacks along normal monomorphisms;
    \item normal monomorphisms are closed under pushouts along a normal epimorphisms.
\end{enumerate}
(By~\cite[Lemma~1.3]{Mozgovoy}, condition~(i) implies that the pullbacks and pushotus of~(ii) and~(iii) necessarily exist.)
\end{definition}

It is shown in~\cite[Theorem~1.10]{Mozgovoy} that every parabelian category is proto-exact if we take $(\mathfrak{M}, \mathfrak{E})$ to be the classes of normal monomorphisms and normal epimorphisms. In the remainder of this section, we verify that the category of mosaics is parabelian.

For the next several results, we will let $\cC$ be one of $\SMsc, \Msc$ or $\cMsc$ and let $\mathfrak{M}$ (resp., $\mathfrak{E}$) denote the class of normal monomorphisms (resp., normal epimorphisms) in $\cC$. If $\cC\in\{\SMsc, \Msc, \cMsc\}$, then the normal monomorphisms (resp. the normal epimorphisms) in $\cC$ are exactly the strict monomorphisms 
(resp. the 
cokernels of strict absorptive subsemimosaics or strict submosaics, as in Lemma~\ref{lem:cokernel})
by \cite[Theorem 1.2]{NakamuraReyes}. This also holds for $\cC=\cSMsc$ by the proof of \cite[Theorem 3.16]{NakamuraReyes} and the fact that commutative submosaics are closed under strict subsemimosaics and cokernels.

\begin{lemma}\label{lem:pullback}
Let $\cC\in\{\SMsc, \cSMsc, \Msc, \cMsc\}$. Consider the following pullback diagram in $\cC$ induced by $f\in\mathfrak{M}$ and $g\in\mathfrak{E}$:
\begin{center}
\begin{tikzcd}
	{A\times_{C}B} && B \\
	\\
	{A} && {C}
	\arrow["g", two heads, from=1-3, to=3-3]
	\arrow["f"', hook, from=3-1, to=3-3]
	\arrow["{p_1}"', from=1-1, to=3-1]
	\arrow["{p_2}", from=1-1, to=1-3]
\end{tikzcd}
\end{center}
Then $p_1$ is a cokernel of the strict subobject 
$$D=\{(1_A, x')|g(x')=1_C\}\subset A\times_CB.$$
Therefore, normal epimorphisms are closed under pullbacks by normal monomorphisms.
\end{lemma}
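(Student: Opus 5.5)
Since $f$ is a normal monomorphism, we identify $A$ with a strict (absorptive) subobject of $C$ and $f$ with the inclusion. The forgetful functor to $\Set$ preserves pullbacks, so $A\times_C B$ has underlying set $\{(a,x)\in A\times B : a = g(x)\}$. Its hyperoperation is the one inherited from $A\times B$: $(a,x)\star(a',x') = ((a\star_A a')\times(x\star_B x'))\cap (A\times_C B)$. Because $A$ is strict in $C$, we have $a\star_A a' = a\star_C a'$, and for $x''\in x\star x'$ we get $g(x'')\in g(x)\star g(x') = a\star a'\subseteq A$. Hence the pullback is identified with the regular subobject $g^{-1}(A)\subseteq B$, and $p_1$ corresponds to the restriction $g|\colon g^{-1}(A)\to A$. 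Moreover $g^{-1}(A)$ is itself strict in $B$, so its hyperoperation is exactly $\star_B$.

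First I will check that $D=\{(e_A,x') : g(x')=e_C\}$, i.e.\ $\ker g\subseteq g^{-1}(A)$, is a strict (absorptive) subobject. This is immediate because $\ker g$ is strict in $B$ (it is a kernel) and lies inside $g^{-1}(A)$. Since $g$ is the cokernel of its kernel $K=\ker g$, Lemma~\ref{lem:cokernel} gives two descriptions. The fibers of $g$ are the classes of the chain relation $\equiv_K$ generated by $z'\in z\star K\cup K\star z$. The hyperoperation on $C$ is $c\star c' = g(g^{-1}(c)\star g^{-1}(c'))$, i.e.\ $g$ is short.

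Next I will show that $g|$ is the cokernel of $D\hookrightarrow g^{-1}(A)$ by applying Lemma~\ref{lem:cokernel} to the pair $D\subseteq g^{-1}(A)$ and matching the output with $A$. There are two things to match.

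\emph{Underlying set.} I must show that two elements of $g^{-1}(A)$ are $\equiv_D$-related iff they have the same image under $g$. One direction is clear. For the converse, take a chain $z_1,\dots,z_n$ in $B$ realizing $\equiv_K$. Every $z_i$ lies in the fiber $g^{-1}(g(z_1))\subseteq g^{-1}(A)$, because each step preserves $g$-values. The products $z_i\star k$ computed in $g^{-1}(A)$ coincide with those in $B$ by strictness. So the same chain witnesses $\equiv_D$.

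\emph{Hyperoperation.} For $a,a'\in A$ I need $a\star_A a' = g(g^{-1}(a)\star g^{-1}(a'))$. This follows from shortness of $g$ together with $a\star_A a' = a\star_C a'$. Surjectivity of $g|$ onto $A$ comes from the surjectivity of $g$.

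By Lemma~\ref{lem:cokernel}, the cokernel of $D\hookrightarrow A\times_C B$ is therefore $p_1$, up to the isomorphism with $A$. In particular $p_1\in\mathfrak{E}$, which proves the closure statement.

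The main obstacle I expect is in the step identifying the pullback with $g^{-1}(A)$ and its hyperoperation with $\star_B$. This relies on strictness of $A$ in $C$, which is exactly where normality of $f$ is used. Some care is also needed in the semimosaic case, where absorptivity of $D$ must be verified. This follows from absorptivity of $K$ in $B$: if $a\in x\star b$ with $a,b\in D$, then $x\in K$, and $x\in g^{-1}(A)$ holds automatically. For $\cSMsc$ and $\cMsc$, the same argument applies because all the objects constructed are commutative.
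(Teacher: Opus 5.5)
Your argument is correct, but it takes a different route from the paper.

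You use injectivity and strictness of $f$ to identify $A\times_C B$ concretely with the strict subobject $g^{-1}(A)\subseteq B$, which carries $\star_B$, and to identify $p_1$ with the restriction of $g$. You then recognize $p_1$ as the explicit cokernel model of Lemma~\ref{lem:cokernel} by matching two things:
\begin{itemize}
\item the fibers, by lifting $\ker g$-chains into $g^{-1}(A)$;
\item the hyperoperation, via shortness of $g$ together with $f(a\star_A a')=f(a)\star_C f(a')$.
\end{itemize}

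The paper instead checks the universal property directly. Given $\alpha$ with $\alpha(D)=1_{A'}$, it sets $\beta(x)=\alpha(x,x')$. It uses the chain description of the fibers of $g$ only to show $\beta$ is well defined, via $(x,z_i)\in(x,z_{i+1})\star(1_A,e_i)$. The check that $\beta$ is a morphism is left as ``straightforward''; it does go through using shortness of $g$.

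What each route buys:
\begin{itemize}
\item Yours gives an explicit picture of the pullback as a strict subobject of $B$. It also makes explicit that $D$ is strict and absorptive, which the paper takes for granted.
\item The paper's is shorter and never uses that $f$ is injective or strict. The same argument therefore shows that $p_1$ is the cokernel of $D$ for pullback along an arbitrary morphism $f$. Your identification of the pullback with $g^{-1}(A)$ genuinely depends on $f\in\mathfrak{M}$.
\end{itemize}
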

\begin{proof}

Let $\alpha:A\times_CB\rightarrow A'$ be a morphism such that $\alpha(D)=1_{A'}$. We define $\beta:A\rightarrow A'$ by $\beta(x):=\alpha(x, x')$ where $x'\in B$ is any element such that $(x, x')\in A\times_CB$.

We must show that $\beta$ is well-defined and a morphism, from which it will follow that it is the unique morphism making the diagram
\[\begin{tikzcd}
	D && {A\times_CB} && A \\
	\\
	&&&& {A'}
	\arrow[hook, from=1-1, to=1-3]
	\arrow["{p_1}", two heads, from=1-3, to=1-5]
	\arrow["\alpha", from=1-3, to=3-5]
	\arrow["{\exists!\beta}", dashed, from=1-5, to=3-5]
\end{tikzcd}\]
commute. If $(x, x'), (x, y')\in A\times_CB$, then we have $g(x')=f(x)=g(y')$. Since $g$ is a cokernel, by Lemma~\ref{lem:cokernel} there exists a sequence $x'=z_0, z_1, \cdots, z_n=y'$ in $B$ such that for every $i$ there exists some $e_i\in\ker g$ satisfying 
$$z_i\in z_{i+1}\star e_i\cup e_i\star z_{i+1}\ \text{or}\  z_{i+1}\in z_{i}\star e_i\cup e_i\star z_{i}$$
(note that a cokernel is the cokernel of its kernel). By an inductive argument, $g(z_i)=g(z_0)=f(x)$, i.e., $(x, z_i)\in A\times_CB$ for all $i$.
If $z_i\in z_{i+1}\star e_i$ for example, then we have
$$\alpha(x, z_i)\in\alpha(x, z_{i+1}\star e_i)\subset \alpha(x, z_{i+1})\star\alpha(1_A, e_i)=\alpha(x, z_{i+1})\star 1_{A'}=\alpha(x, z_{i+1})$$
where the first equality follows from the assumption that $\alpha(D)=1_{A'}.$
The other cases follow similarly. An inductive argument shows that $\alpha(x, x')=\alpha(x, y')$. It is straightforward to show that $\beta$ is indeed a morphism in $\cC$ and that it is unique morphism that makes the diagram commute. 
This verifies that $p_1$ is the cokernel of the inclusion $D \hookrightarrow A \times_C B$ as desired.
\end{proof}

Our next goal is to show that the normal monomorphisms are closed under pushouts along normal epimorphisms (Corollary~\ref{cor:pushout}). This requires the following lemma on $\HMag$.

\begin{lemma}\label{HML}
Consider the following pushout diagram in $\HMag$ induced by a homomorphism $f \colon C\rightarrow A$ and an injective strict homomorphism $g \colon C\hookrightarrow B$:
\begin{center}
\begin{tikzcd}
	C && B \\
	\\
	A && {A+_CB}
	\arrow["{i_2}", from=1-3, to=3-3]
	\arrow["{i_1}"', from=3-1, to=3-3]
	\arrow["f"', from=1-1, to=3-1]
	\arrow["g", hook, from=1-1, to=1-3]
\end{tikzcd}
\end{center}
Then the underlying set of the pushout $P:=A+_CB$ can be described as $A\coprod (B\setminus g(C))$. The induced homomorphism
$i_1:A\hookrightarrow P$ is the canonical injective map and the homomorphism $i_2:B\rightarrow P$ is given by
$$i_2(b)=\left\{\begin{array}{llll}
f(c) & \text{if}\ b=g(c)\ \text{for unique}\ c\in C\\
b & \text{otherwise}
\end{array}\right.$$
The hyperoperation on $P$ as a pushout $A+_CB$ is given by
$$x\star_P y=\left\{\begin{array}{llll}
x\star_Ay & \text{if }\ x, y\in A\\
i_2(g(f^{-1}(x))\star_By) & \text{if }\ x\in A,\ y\in B\setminus g(C)\\
i_2(x\star_Bg(f^{-1}(y))) & \text{if }\ x\in B\setminus g(C),\ y\in A\\
i_2(x\star_By) & \text{if }\ x, y\in B\setminus g(C)
\end{array}\right.$$
\end{lemma}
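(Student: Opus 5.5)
We construct the pushout in $\HMag$ directly and verify its universal property. Put $P := A \sqcup (B\setminus g(C))$, with $i_1$ the canonical inclusion and $i_2$ as in the statement. The map $i_2$ is well defined because $g$ is injective. We have $i_2\circ g = i_1\circ f$ by construction. We give $P$ the hyperoperation displayed in the statement, with $f^{-1}(x)$ a (possibly empty) subset of $C$, and use the extended operation on subsets. The plan is to check three things: (a) $i_1$ and $i_2$ are hypermagma morphisms; (b) every compatible cocone factors through $P$; (c) the factorization is unique.

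For (a), the map $i_1$ is a morphism because $x\star_P y = x\star_A y$ for $x,y\in A$. The morphism property of $i_2$ requires a case split on whether $b,b'\in B$ lie in $g(C)$.
\begin{itemize}
\item If $b=g(c)$ and $b'=g(c')$, strictness of $g$ gives $b\star_B b' = g(c\star_C c')$. Then $i_2(b\star_B b') = f(c\star_C c')\subseteq f(c)\star_A f(c') = i_2(b)\star_P i_2(b')$.
\item If $b=g(c)$ and $b'\notin g(C)$, then $c\in f^{-1}(f(c))$, so $i_2(b\star_B b')\subseteq i_2(g(f^{-1}(f(c)))\star_B b') = f(c)\star_P b'$.
\end{itemize}
The symmetric mixed case and the case with both outside $g(C)$ are immediate from the formula.

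For (b), suppose $\alpha\colon A\to Q$ and $\beta\colon B\to Q$ are morphisms with $\alpha f=\beta g$. Define $h\colon P\to Q$ by $h|_A=\alpha$ and $h|_{B\setminus g(C)}=\beta$. Then $h i_1=\alpha$ and $h i_2=\beta$, the latter because $\beta(g(c))=\alpha(f(c))$. Uniqueness in (c) is clear since $i_1$ and $i_2$ are jointly surjective. It remains to show $h$ is a morphism, again by cases.
\begin{itemize}
\item If $x,y\in A$, use that $\alpha$ is a morphism.
\item If $x\in A$ and $y\notin g(C)$, take $z\in g(c)\star_B y$ with $f(c)=x$. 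Then
\[
h(i_2(z))=\beta(z)\in \beta(g(c))\star_Q\beta(y)=\alpha(x)\star_Q h(y).
\]
\item The remaining cases are the same.
\end{itemize}

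Strictness and injectivity of $g$ are used for more than well-definedness: strictness is exactly what makes $i_2$ a morphism when both arguments lie in $g(C)$. Without it, $b\star_B b'$ could contain elements outside $g(C)$ whose image under $i_2$ would not lie in $f(c)\star_A f(c')$. So the main obstacle is this first case of (a), and the key point there is that $g(c\star_C c')=g(c)\star_B g(c')$. One further detail needs care: when $f^{-1}(x)$ is empty, products of $x$ with elements of $B\setminus g(C)$ are empty. This is harmless, since in (a) such $x$ never arise as $i_2(b)$, and in (b) the containment is vacuous.
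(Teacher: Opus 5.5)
Your proposal is correct and follows the paper's proof. Like the paper, you build $P = A \sqcup (B\setminus g(C))$ explicitly, use strictness of $g$ to show $i_2$ is a morphism when both arguments lie in $g(C)$, and then define the mediating map piecewise and check it is a morphism case by case. You also spell out a few points the paper leaves implicit: the mixed case for $i_2$, uniqueness via joint surjectivity of $i_1, i_2$, and what happens when the fibers of $f$ are empty.
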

\begin{proof}
Let $P$ be the hypermagma defined above.
The canonical injective map $i_1 \colon A\hookrightarrow P$ is certainly a morphism of hypermagmas. To see that the map $i_2 \colon B\rightarrow P$ defined above is a morphism of hypermagmas, let $b_1$ and $b_2$ be elements of $B$. If $b_1=g(c_1)$ and $b_2=g(c_2)$ for some $c_1, c_2\in C$, then using the fact that $g$ is strict we have
\begin{eqnarray*}
i_2(b_1\star_B b_2)&=&i_2(g(c_1)\star_B g(c_2))=i_2\circ g(c_1\star_C c_2)=f(c_1\star_C c_2)\\
&\subseteq& f(c_1)\star_Af(c_2)=i_2(b_1)\star_Ai_2(b_2)=i_2(b_1)\star_P i_2(b_2).
\end{eqnarray*}
If $b_1\notin g(C)$ or $b_2\notin g(C)$, then the definition of the hyperoperation on $P$ tells us that the product $i_2(b_1)\star_Pi_2(b_2)$ contains $i_2(b_1\star_B b_2)$. Thus $i_2:B\rightarrow P$ is a homomorphism of hypermagmas.

Now to prove the claim, it suffices to check the universal property for the hypermagma $P$ and morphisms $i_1$, $i_2$ defined above.
By definition $i_1\circ f=i_2\circ g$. Suppose that we have morphisms $f':A\rightarrow D$ and $g':B\rightarrow D$ such that $f'\circ f=g'\circ g$. We define $h:P=A\coprod (B\setminus g(C))\rightarrow D$ by
$$h(x):=\left\{\begin{array}{llll}
f'(x) & \text{if}\ x\in A\\
g'(x) & \text{if}\ x\in B\setminus{g(C)}
\end{array}\right.$$
\[\begin{tikzcd}
	C && B \\
	\\
	A && {P} \\
	\\
	&&&& D
	\arrow["g", hook, from=1-1, to=1-3]
	\arrow["f"', from=1-1, to=3-1]
	\arrow["{i_1}"', from=3-1, to=3-3]
	\arrow["{i_2}", from=1-3, to=3-3]
	\arrow["{f'}"', from=3-1, to=5-5]
	\arrow["{g'}", from=1-3, to=5-5]
	\arrow["{\exists!h}", dashed, from=3-3, to=5-5, pos=0.3]
\end{tikzcd}\]
By definition $h$ satisfies $h\circ i_1=f'$ and $g'=h\circ i_2$ and it is the unique map satisfying these relations. We must check that $h$ is a morphism of $\HMag$. If $x, y\in A$ or $x, y\in B\setminus g(C)$, we easily see that $h(x\star y)\subseteq h(x)\star_D h(y)$. Suppose that $x\in A$ and $y\in B\setminus g(C)$. Then
\begin{align*} 
h(x\star_P y)&=h\circ i_2(g(f^{-1}(x)\star_By))=g'(g(f^{-1}(x)\star_By)) \\
& \subseteq g'\circ g(f^{-1}(x))\star_Dg'(y)
=f'\circ f(f^{-1}(x))\star_Dg'(y)\subset f'(x)\star_D g'(y) \\
&=h(x)\star_Dh(y)
\end{align*}
The other case is similar. 
\end{proof}

The next few results use the unitization construction of~\cite[Definition~3.7]{NakamuraReyes}. If $M$ is a hypermagma and $E \subseteq H$ is a subset, the \emph{unitization of $M$ with respect to $E$} is a semimosaic $M_E$ (with identity $e$) equipped a morphism $\pi_E \colon M \to M_E$ that is universal with respect to the property $E \subseteq \pi^{-1}(e)$. The set $\pi^{-1}(e)$ is in fact the smallest strict absorptive subhypermagma of $M$ containing $E$. For details of its construciton, see~\cite[Lemma~3.10]{NakamuraReyes}.

\begin{corollary}
    In the categories $\SMsc$ and $\cSMsc$, normal monomorphisms are not closed under pushout along normal epimorphisms. Therefore, they are not parabelian.
\end{corollary}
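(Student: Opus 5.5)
The plan is to give an explicit finite counterexample: a normal monomorphism $g \colon C \hookrightarrow B$ and a normal epimorphism $f \colon C \twoheadrightarrow A$ in $\cSMsc$ (hence also in $\SMsc$) whose pushout leg $i_1 \colon A \to A +_C B$ is not a normal monomorphism. By the characterization recalled before Lemma~\ref{lem:pullback}, this means $i_1$ is either non-injective, or non-strict, or has non-absorptive image. Once the square is shown to be a pushout in $\SMsc$, it is also the pushout in $\cSMsc$ if all objects involved are commutative. So one commutative example settles both categories, and they then fail condition~(iii) of the definition of parabelian.

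To compute the pushout, I would first form the pushout $P$ in $\HMag$ using Lemma~\ref{HML}, which applies because $g$ is strict and injective. I would then pass to the unitization $P_E$ with respect to $E=\{e\}$, using the construction of~\cite[Definition~3.7]{NakamuraReyes} recalled just above. The universal properties of the $\HMag$ pushout and of the unitization combine to show that $\pi_E \circ i_1$ and $\pi_E \circ i_2$ give the pushout in $\SMsc$. By~\cite[Lemma~3.10]{NakamuraReyes}, $\pi_E^{-1}(e)$ is the smallest strict absorptive subhypermagma of $P$ containing $e$, so it can be computed by hand in a small example.

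The mechanism to exploit is the following. In $\SMsc$, a strict subsemimosaic must in addition be absorptive to be normal, and absorptivity is not preserved by collapsing $C$. Concretely, I would take $C=\{e,c\}$ with $c\star c=\{e\}$, embedded strictly and absorptively in a commutative semimosaic $B=\{e,c,x,y\}$, and set $x\star c=\{y\}$. Let $f$ be the cokernel $C\to 0$, which is normal by Lemma~\ref{lem:cokernel}. In $P$, the product $x\star_P e = i_2(x\star_B\{e,c\})$ then contains both $x$ and $y$. So $e$ is no longer an identity in $P$, and the unitization must make further identifications. I would adjust the remaining products, for example putting $e$ into $x\star y$ or into $y\star y$, so that $\pi_E^{-1}(e)$ swallows $x$ or $y$ while the axioms for $B$ and for $C\subseteq B$ being strict and absorptive still hold. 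If instead $A$ is taken larger than $0$, the same collapse should be arranged so that two distinct elements of $A$ become identified, which makes $i_1$ fail to be injective. The alternative is to produce $a\in z\star b$ with $a,b\in A$ and $z\notin A$, so that the image of $A$ is not absorptive.

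The main obstacle is this design step. The example must be tuned so that the unitization closure genuinely damages the image of $A$ rather than merely collapsing elements of $B\setminus C$ among themselves; the naive choice above only identifies $x$ with $y$, which is harmless. I would first try enlarging $A$ (for instance taking $C=\{e,c,c'\}$ and letting $f$ collapse only $c$), so that the forced identifications reach into $A$. After that, the proof reduces to a finite verification of the (commutative) semimosaic axioms, of strictness and absorptivity of $C\subseteq B$, and a direct computation of $\pi_E^{-1}(e)$.
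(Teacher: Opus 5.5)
\paragraph{The design step cannot be completed.} The step you flag as the main obstacle fails for every choice of data, so no counterexample of the kind you seek exists.

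\emph{Products in the $\HMag$ pushout.} Let $g(C)$ be strict and absorptive in $B$, and let $f\colon C\to A$ be arbitrary. For $x\in B\setminus g(C)$ and $c\in C$, absorptivity says that $x\star_B g(c)$ and $g(c)\star_B x$ miss $g(C)$. Hence, in the $\HMag$ pushout of Lemma~\ref{HML}, both $x\star_P a=i_2(x\star_B g(f^{-1}(a)))$ and $a\star_P x$ lie in $B\setminus g(C)$ for every $a\in A$. Strictness of $g$ gives $a\star_P a'=a\star_A a'$.

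\emph{Consequences for the unitization.} $\{1_A\}$ is already strict and absorptive in $P$. The unitization relation, generated by $z'\in z\star_P 1_A\cup 1_A\star_P z$, has singleton classes on $A$ and never links $A$ with $B\setminus g(C)$. So $\pi\circ i_1$ is injective and strict. Its image is absorptive, because $\pi^{-1}[x]\star_P a'$ and $a'\star_P\pi^{-1}[x]$ stay inside $B\setminus g(C)$ whenever $[x]\notin\pi(A)$.

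This is just the proof of Corollary~\ref{cor:pushout}, with absorptivity of $g(C)$ replacing reversibility. It works for arbitrary $f$. So the pushout leg is always a normal monomorphism in $\SMsc$ and in $\cSMsc$. Your specific attempts fail for exactly this reason:
\begin{itemize}
\item With $A=0$, the leg $0\to P_u$ is trivially normal.
\item Putting $e$ into $x\star y$ or $y\star y$ does not affect absorptivity of $\{1_A\}$, which only involves products with a factor equal to $1_A$.
\item Enlarging $A$ cannot make the forced identifications reach $A$.
\end{itemize}

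\paragraph{The paper's own proof has the same problem.} Its inclusion $F_2=\{e,a,c\}\hookrightarrow E$ is strict but not absorptive: $c\in a\star b$ and $e\in c\star b$ with $b\notin F_2$. So it is not a kernel in $\SMsc$. Kernels $h^{-1}(e)$ are always absorptive, since $a\in x\star b$ with $h(a)=h(b)=e$ gives $e\in h(x)\star e=\{h(x)\}$.

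The proof relies on the sentence before Lemma~\ref{lem:pullback}, which equates normal monomorphisms of $\SMsc$ with all strict monomorphisms. That conflicts with the characterization in Section~\ref{sec:hyperstructures}, which you correctly use. There is also a computational slip: its value $e\star_P b=\{e,c\}$ should be $\{b,c\}$. Computed correctly, $\{e\}$ is absorptive in $P$, $P_u\cong\Z_2$, and $F_1\to P_u$ is bijective but not strict.

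So that example only shows that strict, non-absorptive monomorphisms are not stable under such pushouts. With normal monomorphisms taken to be the strict absorptive ones, the first assertion of the corollary is false as stated. The deduction that $\SMsc$ and $\cSMsc$ are not parabelian therefore loses its support, and no completion of your plan can succeed.
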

\begin{proof}
    Let $F_2:=\{e, a, c\}$ and $F_1:=\{e, c\}$ be free semimosaics where $e$ is the identity. Let $f \colon F_2\rightarrow F_1$ be the morphism defined by sending $a\mapsto e$ and $c\mapsto c$. Let $E:=\{e, a, b, c\}$ be a semimosaic where $e$ is the identity,  $a\star b=c$, $c\star b=e$, and all other products are empty. Let $g$ denote the inclusion $F_2\hookrightarrow E$. The morphism $f$ (resp. $g$) is a strict epimorphism (resp. monomorphism) in $\SMsc$.
\[\begin{tikzcd}
	F_2={\{e, a, c\}} & E={\{e, a, b, c\}} \\
	F_1={\{e, c\}} & P={\{e, b, c\}}
	\arrow["g", hook, from=1-1, to=1-2]
	\arrow["f"', two heads, from=1-1, to=2-1]
	\arrow["{i_2}", from=1-2, to=2-2]
	\arrow["{i_1}"', from=2-1, to=2-2]
\end{tikzcd}\]
The pushout of $f$ and $g$ in $\HMag$ is given by $P:=\{e, b, c\}$ where, $i_1$ is the inclusion and $i_2$ sends $a\mapsto e$ and preserves other elements. Note that the pushout $P_u$ of $f$ and $g$ in $\SMsc$ is the unitization of $P$ with respect to the subset $\{e\}\subset P$. This is the same as the unitization of $P$ by the smallest strict absorptive subhypermagma of $P$ containing $e$.

By Lemma~\ref{HML}, 
$$e\star_P b=i_2(g(f^{-1}(e))\star b)=i_2(g(\{e, a\})\star b)=i_2(b, c)=\{e, c\}$$
and
$$c\star_Pb=i_2(g(f^{-1}(c))\star b)=i_2(g(\{c\})\star b)=i_2(e)=e.$$
Therefore, the smallest absorptive strict subhypermagma of $P$ containing $e$ is $P=\{e, b, c\}$. Thus the pushout $P_u$ in $\SMsc$ is trivial and the canonical homomorphism $F_1\rightarrow P_u$ is not a normal monomorphism in $\SMsc$. A similar argument works for $\cSMsc$ by taking $E={e, a, b, c}$ where $e$ is the identity, $a\star b=b\star a=c, c\star b=b\star c=e$ and other products are empty.
\end{proof}

\begin{corollary}\label{cor:pushout}
Consider the following pushout diagram in $\cC \in \{\Msc, \cMsc\}$ induced by $f\in\mathfrak{E}$ and $g\in\mathfrak{M}$:
\begin{center}
\begin{tikzcd}
	C && B \\
	\\
	A && {A+_CB}
	\arrow["{i_2}", from=1-3, to=3-3]
	\arrow["{i_1}"', from=3-1, to=3-3]
	\arrow["f"', two heads, from=1-1, to=3-1]
	\arrow["g", hook, from=1-1, to=1-3]
\end{tikzcd}
\end{center}
Then $i_1\in\mathfrak{M}$. Therefore, normal monomorphisms are closed under pushout by normal epimorphisms.
\end{corollary}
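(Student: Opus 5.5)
The plan is to avoid computing the pushout in $\HMag$ via Lemma~\ref{HML}, since it need not be a mosaic, and to identify the pushout directly as a cokernel. Since $f\in\mathfrak{E}$, it is the cokernel of its kernel $k\colon K\hookrightarrow C$, where $K=f^{-1}(e_A)$ is a strict submosaic of $C$. By Lemma~\ref{lem:cokernel} we may take $A=C/K$ and $f=p_C$ the quotient map. Since $g\in\mathfrak{M}$ is a strict injective morphism, we may regard $C\subseteq B$ as a strict submosaic. Then $K\subseteq C\subseteq B$ is a strict submosaic of $B$, because a composite of strict injective morphisms is strict and injective.

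\textbf{Step 1: identify the pushout.} I claim the square with $i_2=p_B\colon B\twoheadrightarrow B/K$ (the cokernel of $K\hookrightarrow B$) and $i_1\colon C/K\to B/K$ the induced map is a pushout. This is the standard pasting argument for cokernels. A pair $f'\colon A\to D$, $g'\colon B\to D$ with $f'\circ f=g'\circ g$ forces $g'$ to kill $K$, since $g'(K)=f'(f(K))=e$. So $g'$ factors uniquely through $B/K$. The factorization also agrees with $f'$ on $A$, because $f$ is an epimorphism. Conversely, any map out of $B/K$ restricts along $i_1$ to a map out of $A$. This identifies $A+_CB\cong B/K$ in $\cC$. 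The existence of $i_1$ comes from the universal property of $f=\operatorname{coker}k$ applied to $p_B\circ g$. For $\cC=\cMsc$ the same argument works, since the cokernel of a commutative mosaic is commutative.

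\textbf{Step 2: $i_1$ is injective.} This is the key point. By Lemma~\ref{lem:cokernel}, $x\equiv y$ in $B$ means there is a chain $x=z_1,\dots,z_n=y$ with $z_{i+1}\in z_i\star K\cup K\star z_i$ or the reverse. Suppose $z_i\in C$. Since $K\subseteq C$ and $C$ is strict, $z_i\star K\subseteq C$ and $K\star z_i\subseteq C$. In the reverse case, $z_i\in z_{i+1}\star k$ with $z_i,k\in C$, and absorptivity of $C$ (automatic for strict submosaics of mosaics, by reversibility) gives $z_{i+1}\in C$. Hence every chain starting in $C$ stays in $C$. Two consequences follow:
\begin{enumerate}
\item the restriction to $C$ of the $K$-equivalence on $B$ is exactly the $K$-equivalence on $C$;
\item every class in $B/K$ meeting $C$ is contained in $C$.
\end{enumerate}
Consequence (1) gives injectivity of $i_1$.

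\textbf{Step 3: $i_1$ is strict.} Let $a,b\in A$. Their preimages under $p_B$ are classes contained in $C$, by consequence (2) of Step 2. Hence
\[
i_1(a)\ostar i_1(b)=p_B\bigl(p_B^{-1}(a)\star_B p_B^{-1}(b)\bigr),
\]
and the product on the right is computed inside $C$, where $\star_B=\star_C$ by strictness of $C$. By consequence (1) this equals $i_1\bigl(p_C(p_C^{-1}(a)\star_C p_C^{-1}(b))\bigr)=i_1(a\ostar b)$. So $i_1$ is a strict injective morphism, and therefore lies in $\mathfrak{M}$ by the characterization of normal monomorphisms recalled above.

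I expect the main obstacle to be Step 2. It is the only place where the mosaic hypothesis enters, through absorptivity of strict submosaics, which is exactly what fails in the preceding counterexample for $\SMsc$. Care is needed to check both directions of each link in the chain.
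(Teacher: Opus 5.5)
Your proof is correct, and it takes a genuinely different route from the paper's.

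\textbf{The paper's route.} It works concretely. It takes the $\HMag$ pushout $P=A\coprod(B\setminus g(C))$ of Lemma~\ref{HML}. It appeals to the constructions in \cite{NakamuraReyes} to identify the pushout in $\cC$ with the unitization of $P$ at $1_A$. It then shows that $\{1_A\}$ is absorptive in $P$: for $x\notin g(C)$, reversibility and strictness of $g$ force $x\star_P 1_A\cup 1_A\star_P x\subseteq B\setminus g(C)$. Hence the identifying chains of the unitization are constant on $A$. Strictness of $i_1$ is then read off from the formulas for the two hyperoperations.

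\textbf{Your route.} You use the general pasting fact that, in a pointed category with cokernels, the pushout of $f=\operatorname{coker}(k)$ along $g$ is $\operatorname{coker}(g\circ k)$. This gives the explicit identification $A+_CB\cong B/\ker f$ and reduces everything to Lemma~\ref{lem:cokernel}. The key point becomes that $K$-chains starting in $g(C)$ never leave it: strictness handles forward links and absorptivity handles backward links.

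\textbf{What each buys.} Your approach avoids Lemma~\ref{HML} and the unitization machinery altogether. It gives a description of the pushout that the paper does not state, and it makes the strictness check fully explicit. The paper's approach stays closer to how colimits in $\cC$ are actually constructed in \cite{NakamuraReyes}, and it isolates a single absorptivity statement about $\{1_A\}$ as the crux.

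\textbf{A caution about your closing remark.} The only properties of $g(C)$ your argument uses are that it is strict and absorptive in $B$. Normal monomorphisms in $\SMsc$ are exactly the strict \emph{absorptive} subobjects, so the mosaic hypothesis is not really what makes Step~2 work. In the preceding $\SMsc$ example, $\{e,a,c\}$ is not absorptive in $E$, because $c\in a\star b$ with $b\notin\{e,a,c\}$. So that $g$ is not a kernel in $\SMsc$ at all. The failure you noticed therefore shows that the example does not exhibit a normal monomorphism, rather than marking where the mosaic hypothesis is needed. Your argument seems to go through in $\SMsc$ as well, once you add the easy check that $i_1(A)$ is absorptive in $B/K$.
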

\begin{proof}
By following the proofs of \cite[Theorems~3.11, 4.1]{NakamuraReyes}, we see that the pushout $P_u:=A+_CB$ in $\cC$ is the unitization of the pushout $P=A\coprod (B\setminus g(C))$ in $\HMag$ with respect to the element $1_A\in P$. Note that the resulting semimosaic is naturally endowed with a mosaic structure, as in the proof of~\cite[Theorem~4.1]{NakamuraReyes}.

We claim that the subhypermagma $\{1_A\} \subseteq P$ is absorptive. This means that for $x \in P$, if $1_A \in x \star_P 1_A \cup 1_A \star_P x$, then $x = 1_A$. If $x \in A$, this is easily verified since $A$ is a strict subhypermagma of $P$. On the other hand, if $x \in B \setminus g(C)$ we claim that $x\star_P1_A\cup 1_A\star_Px\subset B\setminus g(C)$; this will complete the claim since $1_A \notin B \setminus g(C)$.
Note that by the description of the multiplication on $P$ given by Lemma~\ref{HML}, we have 
$$x\star_P1_A=i_2(x\star_Bg(f^{-1}(1_A)))\subset P.$$
To see that this is a subset of $B\setminus g(C)$, it suffices to show that the subset $x\star_Bg(f^{-1}(1_A))\subset B$ is indeed a subset of $B\setminus g(C)$. Suppose toward a contradiction that the subset $x\star_Bg(f^{-1}(1_A))$ contains $g(z)$ for some $z\in C$. This means that there exists $y\in C$ such that $g(z)\in x\star_Bg(y)$ and $f(y)=1_A$. The reversibility axiom and the strictness of $g$ imply that $x$ lies in $$g(z)\star_Bg(y)^{-1}=g(z)\star_Bg(y^{-1})=g(z\star_C y^{-1}),$$
which is a contradiction since $x\notin g(C)$.
Therefore the subset $x\star_Bg(f^{-1}(1_A))$ is contained in $B\setminus g(C)$ and hence $x\star_P1_A\subset B\setminus g(C)$. A symmetric argument shows that $1_A\star_Px\subset B\setminus g(C)$.

Now that we know $\{1_A\}$ is a strict absorptive subhypermagma of $P$, it follows from
the construction in~\cite[Lemma 3.10]{NakamuraReyes}, two elements $x, y\in P$ are identified in $P_u$ if there exists a sequence $x=z_1, \cdots, z_n=y$ such that 
$$z_{i+1}\in z_i\star_P 1_A\cup 1_A\star_P z_i\ \text{or}\ z_{i}\in z_{i+1}\star_P 1_A\cup 1_A\star_P z_{i+1}.$$
If $x, y$ belong to the strict subhypermagma $A\subset P$, then we inductively see that 
$x=z_1=\cdots=z_n=y$. This means 
the equivalence relation on $P=A\coprod B\setminus g(C)$ whose quotient defines the unitization $P_u$ restricts to the trivial relation on the subset $A$. Therefore the canonical homomorphism $A\rightarrow A+_CB$ is injective. The strictness follows from the definition of the multiplication on $P$ given by Lemma~\ref{HML} and by the way the multiplication of the unitization is defined in the proof of~\cite[Lemma~3.10]{NakamuraReyes}.
\end{proof}

This immediately yields our desired result.

\begin{theorem}\label{thm:parabelian}
The categories $\Msc$ and $\cMsc$ are parabelian. Thus they are proto-exact with respect to the classes of normal monomorphisms and normal epimorphisms.
\end{theorem}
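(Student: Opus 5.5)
The plan is to check the three axioms in the definition of a parabelian category (following Mozgovoy) for $\cC \in \{\Msc, \cMsc\}$, and then quote \cite[Theorem~1.10]{Mozgovoy} for the proto-exact conclusion.

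First, $\cC$ is pointed: the one-element mosaic $\{e\}$ is a zero object, as recalled in Section~\ref{sec:hyperstructures}. For axiom~(i), \cite{NakamuraReyes} shows that $\Msc$ and $\cMsc$ are complete and cocomplete. In particular they have equalizers and coequalizers, and hence kernels and cokernels, computed as the equalizer or coequalizer with the zero morphism. Lemma~\ref{lem:cokernel} gives the cokernels explicitly, and the kernel of $f$ is the strict submosaic $f^{-1}(e)$. For $\cMsc$ I will note that these constructions stay inside commutative mosaics. This is visible from the formula $a \ostar b = p(p^{-1}(a)\star p^{-1}(b))$ in Lemma~\ref{lem:cokernel} and from the fact that a strict submosaic inherits its products from the ambient mosaic.

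For axiom~(ii), I take a normal epimorphism $g\colon B \to C$ and a normal monomorphism $f\colon A\to C$. The pullback exists because the category is complete, and Lemma~\ref{lem:pullback} shows that the projection $p_1\colon A\times_C B\to A$ is the cokernel of the strict subobject $D$. Hence $p_1$ is a normal epimorphism, and this holds in each of $\SMsc,\cSMsc,\Msc,\cMsc$. The one point to confirm is that $D$ really is a strict submosaic. It is closed under products because $g$ is a morphism, it contains $(1_A,1_B)$, and it is closed under inverses because morphisms of mosaics preserve inverses.

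For axiom~(iii), I take a normal monomorphism $g\colon C\hookrightarrow B$ and a normal epimorphism $f\colon C\twoheadrightarrow A$. Corollary~\ref{cor:pushout} says precisely that the pushout leg $i_1\colon A\to A+_C B$ is in $\mathfrak{M}$. Here I use the characterization of normal monomorphisms as strict injective morphisms from \cite[Theorem~1.2]{NakamuraReyes}, and the fact that an injective strict morphism into a mosaic has a strict submosaic as its image, which is automatically absorptive.

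Having verified (i)--(iii), $\cC$ is parabelian by definition, and \cite[Theorem~1.10]{Mozgovoy} gives the proto-exact structure with $(\mathfrak{M},\mathfrak{E})$ the normal monomorphisms and normal epimorphisms. I expect no real obstacle here, since the work has already been done in Lemma~\ref{lem:pullback} and Corollary~\ref{cor:pushout}. The only delicate point is making sure that, in the commutative case, the limits, colimits and unitizations used in those results are computed inside $\cMsc$. I would cite \cite{NakamuraReyes} for the fact that $\cMsc$ is complete and cocomplete, with constructions compatible with those in $\Msc$.
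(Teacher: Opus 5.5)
Your proposal is correct and follows the paper's proof. Axiom (i) comes from the (co)completeness of $\Msc$ and $\cMsc$ established in \cite{NakamuraReyes}, axioms (ii) and (iii) are exactly Lemma~\ref{lem:pullback} and Corollary~\ref{cor:pushout}, and the proto-exact conclusion is \cite[Theorem~1.10]{Mozgovoy}. The paper states this in one line, so your extra checks (for instance, that $D$ is a strict submosaic) just spell out details it leaves implicit.
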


\begin{proof}
    This follows immediately from Lemma~\ref{lem:pullback} and Corollary~\ref{cor:pushout}.
\end{proof}

Recall the following examples of proto-exact categories from the literature:
\begin{itemize}
\item   $(\Set_*, \mathfrak{M}_{\Set_*}, \mathfrak{C}_{\Set_*})$, where $\mathfrak{M}_{\Set_*}$ is the class of injective morphisms and $\mathfrak{C}_{\Set_*}$ is the class of surjective morphisms that are injective outside the preimages of the basepoints of the codomains (\cite[Example 2.4]{JST}).
\item $(\Can, \mathfrak{M}_{\Can}, \mathfrak{C}_{\Can})$, where $\mathfrak{M}_{\Can}$ is the class of strict monomorphisms and $\mathfrak{C}_{\Can}$ strict epimorphisms (\cite[Theorem 5.11]{JST}).
\end{itemize}  
These categories have fully faithful embeddings into $\cMsc$ described as follows. There is a free functor $F \colon \Set_*\hookrightarrow \cMsc$, which sends every pointed set $X=(X, *)$ to the “free" mosaic whose underlying pointed set is 
$$FX=\{*\}\cup (X\backslash \{*\})\cup -(X\backslash \{*\})$$ 
(a similar construction can be found in the proof of \cite[Theorem 4.3]{NakamuraReyes}).
We also have the inclusion functor $I \colon \Can \hookrightarrow \cMsc$ of the strict subcategory of canonical hypergroups into commutative mosaics. 
The following shows that these embeddings are compatible with the proto-exact structure of each category. Let $\mathfrak{M}_{\cMsc}$ (resp. $\mathfrak{C}_{\cMsc}$) denote the class of normal monomorphisms (resp. normal epimorphisms) in $\cMsc$.

\begin{theorem}
Retain the notation introduced above.
\begin{enumerate}
\item Let $f$ be a morphism in $\Set_*$. Then $f\in\mathfrak{M}_{\Set_*}$ if and only if $F(f)\in\mathfrak{M}_{\cMsc}$. Also, $f\in\mathfrak{C}_{\Set_*}$ if and only if $F(f)\in\mathfrak{C}_{\cMsc}$. 
\item Let $f$ be a morphism in $\Can$. Then $f\in\mathfrak{M}_{\Can}$ if and only if $I(f)\in\mathfrak{M}_{\cMsc}$. Also, $f\in\mathfrak{C}_{\Can}$ if and only if $I(f)\in\mathfrak{C}_{\cMsc}$. 
\end{enumerate}
\end{theorem}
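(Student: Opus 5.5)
The plan is to use the descriptions recalled before Lemma~\ref{lem:pullback}: in $\cMsc$ the normal monomorphisms are the injective strict morphisms, and the normal epimorphisms are the cokernels of strict submosaics. By Lemma~\ref{lem:cokernel}, such a cokernel is the set-theoretic quotient by the equivalence relation generated by $L$, carrying the short hyperoperation $p(p^{-1}(a)\star p^{-1}(b))$. Since any cokernel is the cokernel of its own kernel, a morphism $h$ is normal epi if and only if the canonical comparison map from $M/\ker h$ to the codomain is an isomorphism. First I make $F$ explicit. In $FX$ the basepoint $*$ is the identity, $x+(-x)=\{*\}$, and all other sums of non-identity elements are empty. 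For a pointed map $f$, the morphism $F(f)$ sends $\pm x\mapsto \pm f(x)$, with both going to $*$ when $f(x)=*$.

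For (1), monomorphisms. A normal mono is injective, and $F(f)$ restricted to $X\setminus\{*\}$ is $f$, so $F(f)\in\mathfrak M_{\cMsc}$ forces $f$ to be injective. Conversely, if $f$ is injective and pointed, then $f(x)\neq *$ for $x\neq *$. Hence $F(f)$ is injective, and the three kinds of sums (identity, $x+(-x)$, empty) are visibly carried to sums of the same kind, so $F(f)$ is strict.

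For (1), epimorphisms. Put $K=f^{-1}(*)$. The kernel of $F(f)$ is the strict submosaic $F(K)=\{*\}\cup \pm(K\setminus\{*\})$. In $FX$ a sum $z+l$ with $l\in F(K)$ is nonempty only when $l=*$ (giving $z$) or $z=-l$ (giving $*$). So the equivalence relation of Lemma~\ref{lem:cokernel} collapses $F(K)$ to $*$ and leaves every other element in a singleton class. Computing the quotient hyperoperation shows $FX/F(K)\cong F(X/K)$, where $X/K$ is the pointed set obtained by collapsing $K$. The comparison map $F(X/K)\to FY$ is $F(\bar f)$. Because both sides are free mosaics, this map is an isomorphism exactly when $\bar f$ is bijective. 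That in turn says $f$ is surjective and injective outside $f^{-1}(*)$, which is the definition of $\mathfrak C_{\Set_*}$.

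For (2), monomorphisms. Since $I$ is a full inclusion and strictness and injectivity are computed the same way in both categories, the monomorphism statement is immediate. For the epimorphisms, let $f\colon M\to N$ be a morphism of canonical hypergroups.

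If $I(f)$ is a normal epi, it is the cokernel of $L=\ker f$. By Lemma~\ref{lem:cokernel} (commutative hypergroup case), the classes are the cosets $x+L$, and $q$ is surjective. The step I expect to need the most care is proving strictness of $q$. I would show $q(x)+q(y)=q(L+x+L+y+L)=q(x+y+L)$ by associativity and commutativity, and then use $q(z+l)=q(z)$ for $z\in x+y$ and $l\in L$ to conclude $q(x)+q(y)=q(x+y)$.

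Conversely, suppose $f$ is strict and surjective. If $f(x)=f(y)$, then $0\in f(y)-f(x)=f(y-x)$ by strictness. So some $w\in y-x$ has $f(w)=0$, and then $y\in x+w\subseteq x+L$. Hence the fibers of $f$ are exactly the cosets of $L$. Moreover, strictness together with surjectivity gives $a+b=f(f^{-1}(a)+f^{-1}(b))$, so $f$ is short. Therefore the comparison map $M/L\to N$ is a bijective short morphism, hence an isomorphism, and $I(f)$ is a normal epimorphism.

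A final point to check is that Lemma~\ref{lem:cokernel} computes cokernels in $\cMsc$ and not only in $\Msc$. This holds by the remark preceding Lemma~\ref{lem:pullback} that commutativity is preserved by cokernels.
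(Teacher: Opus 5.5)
Your proposal is correct and takes essentially the paper's approach, with the details written out. In (1) you verify directly the facts the paper only cites: injective morphisms into $FY$ are automatically strict, and the kernel $F(K)$ of $F(f)$ is a strict submosaic whose cokernel is $F(X/K)$. In (2), your identification of the fibers with cosets of $L=\ker f$ and your computation $q(x)+q(y)=q(x+y+L)=q(x+y)$ do the work of the paper's coset description and its compatibility-with-addition argument via Proposition~\ref{prop:strict epi}; your list of nonempty sums $z+l$ in $FX$ misses the case $z=*$, but this does not change the equivalence classes.
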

\begin{proof}
(1) The first assertion follows from the fact that every submosaic of $FY$ is strict. The second assertion follows from the fact that every subset of $FX$ containing the identity and closed under negation of is a strict submosaic.

(2) The first assertion is immediate. We show the second assertion. If $f:X\rightarrow Y$ is a strict surjective homomorphism then $Y$ is isomorphic to the quotient hypergroup $X/Z$ for some sub hypergroup $Z\subset X$. Thus $I(f)$ is a normal epimorphism in $\cMsc$. Conversely, if $I(f)$ is a normal epimorphism in $\cMsc$, then $f$ is a unitization of $X$ relative to some strict subhypergroup $Z\subset X$. The equivalence relation $\sim$ defining the unitization is given as 
$$x\sim x' \overset{\mathrm{def}}{\Longleftrightarrow} x\in x'+z_1+\cdots+z_n,\ \text{for some}\ z_1, \cdots, z_n\in Z$$
by \cite[Lemma 3.10]{NakamuraReyes}. We can easily see that if $x_1\sim x_1'$ and $x_2\sim x_2'$ then $x_1+x_2\sim x_1'+x_2'$, which means the unitization morphism $f$ is strict by (the argument on the additive structure of) \cite[Proposition 3.13]{Jun:geometry}. (It also follows from Proposition~\ref{prop:strict epi} which will be proved later.)
\end{proof}

\begin{remark}\label{rem:proto-abelian}
Another type of category with a canonical proto-exact structure is that of a \emph{proto-abelian} category; see~\cite{Andre, Dyckerhoff} and the comparison of these two definitions in~\cite[Remark~1.13]{Mozgovoy}. These are pointed categories for which the classes of \emph{all} monomorphisms and epimorphisms yields a proto-exact structure. For these categories, every monomorphism must be a kernel and every epimorphism must be a cokernel. This fails for the categories $\Msc$ and $\cMsc$ due to the existence of non-normal monomorphisms and epimorphisms. Thus these are examples of parabelian categories that are not proto-abelian. 
\end{remark}

\section{Special cases and examples}
\label{sec:examples}

In this final section, we describe several examples of effective congruences on mosaics and semimosaics and their quotients. These include both general constructions and some explicit computations.

\subsection{Quotients by endomorphisms and automorphisms}

A common way to produce hyperstructures is by taking quotients of ordinary algebraic structures by group actions. Here we show that a similar principle holds for mosaics and semimosaics. In fact, we can even take quotients by \emph{endomorphisms} rather than automorphisms, an observation that will be required in~\cite{NakamuraReyes:Hoops}.

If $M$ is a set and $\phi \colon M \to M$ is any function, we let ${\equiv_\phi}$ denote the equivalence relation on $M$ generated by $x \equiv_\phi \phi(x)$ for all $x \in M$. If $\phi$ is bijective, this is simply the $\phi$-orbit relation. For general $\phi$, this can be described by
\[
x \equiv_\phi y \iff \phi^m(x) = \phi^n(y) \mbox{ for some } m,n \geq 0.
\]

\begin{proposition}\label{prop:orbit equivalence}
Let $M$ be a (semi)mosaic, and fix $\phi \in \End(M)$. Then $\equiv_\phi$ is a (semi)mosaic equivalence.
\end{proposition}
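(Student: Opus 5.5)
The plan is to check the conditions of Definition~\ref{def:mosaic congruences} directly, using the explicit description
\[
x \equiv_\phi y \iff \phi^m(x)=\phi^n(y) \mbox{ for some } m,n\ge 0 .
\]
First I will describe the class $[e]$. Since $\phi$ is a semimosaic morphism, $\phi^n(e)=e$ for all $n$. So $e'\equiv_\phi e$ if and only if $\phi^m(e')=e$ for some $m\ge 0$. Thus $[e]=\bigcup_{m\ge0}(\phi^m)^{-1}(e)$.

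For the semimosaic condition, suppose $y\in x\star e'$ with $e'\in[e]$, and choose $m$ with $\phi^m(e')=e$. The iterate $\phi^m$ is again a morphism of semimosaics. Applying it and using the morphism inclusion together with the identity axiom $z\star e=\{z\}$ gives
\[
\phi^m(y)\in\phi^m(x\star e')\subseteq \phi^m(x)\star\phi^m(e')=\phi^m(x)\star e=\{\phi^m(x)\}.
\]
Hence $\phi^m(y)=\phi^m(x)$, so $y\equiv_\phi x$ by the description above with $n=m$. The case $y\in e'\star x$ is the mirror image. This proves the semimosaic case.

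For the mosaic case, I also need $[x^{-1}]=[x]^{-1}$. In the background section it was shown that morphisms of mosaics preserve inverses, so $\phi^k(z^{-1})=\phi^k(z)^{-1}$ for all $k$. Therefore $\phi^m(x)=\phi^n(y)$ implies $\phi^m(x^{-1})=\phi^n(y^{-1})$, and so $x\equiv_\phi y$ implies $x^{-1}\equiv_\phi y^{-1}$. This gives $[x]^{-1}\subseteq[x^{-1}]$. For the reverse inclusion I need inversion to be an involution. This holds because $e\in x\star x^{-1}\cap x^{-1}\star x$ exhibits $x$ as an inverse of $x^{-1}$, and inverses in a mosaic are unique. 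Given $z\equiv_\phi x^{-1}$, the forward implication then yields $z^{-1}\equiv_\phi x$, so $z=(z^{-1})^{-1}\in[x]^{-1}$.

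I expect no real obstacle. The only points requiring care are that iterates of $\phi$ remain morphisms fixing $e$ and preserving inverses, and that the identity axiom makes $\phi^m(x)\star e$ a singleton.
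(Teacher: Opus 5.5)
Your proof is correct and follows the same argument as the paper. You identify $[e]$ as the elements sent to $e$ by some iterate of $\phi$, apply $\phi^m$ to $y\in x\star e'$ to force $\phi^m(y)=\phi^m(x)$, and use that morphisms preserve inverses for the mosaic case. Your check of both inclusions in $[x^{-1}]=[x]^{-1}$, via involutivity of inversion, just spells out what the paper leaves as ``easy to see.''
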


\begin{proof}
Fix the identity $e \in M$
Since $\phi(e) = e$, it is straightforward to check that
\[
[e] = \{z \in M \mid \phi^n(z) = e \mbox{ for some } n \geq 0\}.
\]
To see that $\equiv_\phi$ is a (semi)mosaic congruence, suppose $x,y \in M$ with $y \in x \star [e]$. Then there exists $z \in M$ and $n \geq 0$ such that $\phi^n(z) = e$ and $y \in x \star z$. Because $\phi^n \in \End(M)$ is also a morphism of (semi)mosaics, we have
\begin{align*}
\phi^n(y) \in \phi^n(x \star z) &\subseteq \phi^n(x) \star \phi^n(z) \\
&= \{\phi^n(x)\},
\end{align*}
so that $\phi^n(x) = \phi^n(y)$. This proves that $x \equiv_\phi y$, and a symmetric argument shows that $y \in [e] \star x$ implies $y \equiv_\phi x$. Finally, if $M$ is a mosaic and $x \equiv_\phi y$, it is easy to see that $x^{-1} \equiv_\phi y^{-1}$. This completes the proof.
\end{proof}

\begin{example}
For $d \in \Z$, let $\phi_d \in \End(\Z) \cong \Z$ be the endomorphism given by multiplication $\phi_d(x) = dx$, and let $\equiv_d$ denote the congruence $\equiv_{\phi_d}$. Note that $\equiv_0$ is the universal relation and $\equiv_1$ is the diagonal relation. The quotient $\Z/\equiv_{-1}$ is the same as the additive hypergroup of the quotient hyperring $\Z/\{\pm 1\}$, which is given by $[x]+[y] = \{[x+y], [x-y]\}$.

So assume from now on that $|d| > 1$. Let $\langle d \rangle \leq \Q^*$ denote the rational multiplicative group generated by $d$. Note that $[0]_d = \{0\}$ is a singleton equivalence class. For $x,y \in \Z \setminus \{0\}$ we have
\begin{align*}
x \equiv_d y &\iff d^m x = d^n y \mbox{ for some } m,n \geq 0 \\
&\iff x/y \in \langle d \rangle.
\end{align*}
Each equivalence class $[x]_d$ has a unique representative that is not divisible by~$d$; it is equivalently the unique integer element of the $\langle d \rangle$-orbit of $x$ having minimal absolute value. We refer to this element as the \emph{$d$-free part} of $x$.
In case $x$ and $y$ are not divisible by $d$, we have $x \equiv_d y$ if and only if $x = y$. This demonstrates that $\Z/\equiv_d$ is an infinite mosaic for all $d \neq 0$.

Suppose that $d \nmid x,y \neq 0$. Then the elements of $[x]_d \ostar [y]_d$ are those $[z]_d$ where $z = d^ix + d^jy$ for some $i,j \geq 0$. The unique representatives of these classes $[z]_d$ can be taken to be of the form $x+d^jy$ or $d^ix + y$ for $i,j \geq 1$, while $x+y$ may be divisble by $d$ in which case $[x+y]=[z_{x,y}]$ for the $d$-free part $z_{x,y}$ of $x+y$. 
The distinct elements $x+d^j y$ are not divisible by $d$ and thus all represent distinct classes for $j \geq 1$, as do the elements $d^i x + y$ for $i \geq 1$. This shows that 
\[
[x]_d \ostar [y]_d = \{[x+d^jy]\}_{j \geq 1} \cup \{[d^i x+y]\}_{i \geq 1} \cup \{[x+y]\}
\]
is infinite. (Note that the first two sets in the union need not be disjoint, as in the extreme case where $x = y$.)
\end{example}

A quotient construction that is common in the theory of hyperrings~\cite{Krasner:quotient} is the quotient by a group of automorphisms. The same construction works for mosaics and semimosaics as follows.

\begin{corollary}
Let $M$ be a (semi)mosaic, and let $G$ be a group acting on $M$ by automorphisms. Then the $G$-orbit equivalence is a (semi)mosaic equivalence, and the quotient by this equivalence relation is the orbit space $M/G$ endowed with the following hyperoperation:
\[
(Gx) \ostar (Gy) = \{Gz \mid z \in gx \star hy \mbox{ for some } g,h \in G\}.
\]
\end{corollary}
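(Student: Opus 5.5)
The plan is to verify directly that the $G$-orbit relation satisfies Definition~\ref{def:mosaic congruences}, and then read off the quotient hyperoperation from Proposition~\ref{prop:congruences} (equivalently Corollary~\ref{cor:mosaic quotients}). Write $x \sim y$ iff $y = gx$ for some $g \in G$. This is an equivalence relation because $G$ is a group. Each $g$ acts by an automorphism, so $g(e) = e$. Hence $[e] = \{e\}$, since the orbit of $e$ is $\{ge\} = \{e\}$.

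For the semimosaic condition, suppose $y \in x \star [e] \cup [e] \star x$. Since $[e] = \{e\}$, this says $y \in x \star e \cup e \star x = \{x\}$, so $y = x$ and in particular $[y] = [x]$. The condition is therefore trivial here. No use of Proposition~\ref{prop:orbit equivalence} is needed, although one could also observe that a single automorphism fits that framework.

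For the mosaic case, let $y = gx$. Every morphism of mosaics preserves inverses, as noted in Section~\ref{sec:hyperstructures}, so $y^{-1} = (gx)^{-1} = g(x^{-1})$ and $x^{-1} \sim y^{-1}$. Thus $[x^{-1}] = [x]^{-1}$ setwise, and $\sim$ is a mosaic equivalence.

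By Proposition~\ref{prop:congruences}, the quotient by $\sim$ has underlying set the orbit space $M/G$, with the product given by \eqref{eq:quotient product}:
\[
(Gx)\ostar(Gy) = \{ Gz \mid z \in x' \star y' \text{ for some } x' \in Gx,\ y' \in Gy\}.
\]
Writing $x' = gx$ and $y' = hy$ gives the stated formula. The equivalence also corresponds to an effective congruence by Theorem~\ref{thm:effective congruence}, so this is a genuine quotient object.

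No step is a real obstacle. The only point needing care is observing that $[e] = \{e\}$, which makes the semimosaic condition automatic.
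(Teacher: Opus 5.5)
Your proof is correct, but it takes a more direct route than the paper's.

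The paper writes the $G$-orbit relation as the union of the relations $\equiv_\phi$ for $\phi \in G$, and then reduces to Proposition~\ref{prop:orbit equivalence}, the endomorphism case. If $y \in x \star u$ with $u \equiv_G e$, then $u \equiv_\phi e$ for a single $\phi \in G$. Since $\equiv_\phi$ is already a (semi)mosaic equivalence, this gives $y \equiv_\phi x$ and hence $y \equiv_G x$. The formula for $\ostar$ is then read off from shortness of the orbit map.

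You instead use the extra injectivity that automorphisms provide. Each element of $G$ fixes $e$, so the class $[e]$ is the singleton $\{e\}$. The condition in Definition~\ref{def:mosaic congruences} then collapses to $y \in x \star e \cup e \star x = \{x\}$ and holds trivially. The inverse condition follows because morphisms preserve inverses.

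What your argument buys is self-containment. It also makes explicit something the paper's proof leaves implicit: the orbit map $M \to M/G$ has trivial kernel. In the language of Theorem~\ref{thm:pairs}, this is the case $L=\{e\}$. A nontrivial identity class only appears in the endomorphism setting, where $[e]=\{z : \phi^n(z)=e \text{ for some } n\}$ can be large.

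What the paper's route buys is economy within the section. It presents the corollary as a formal consequence of the endomorphism result rather than as a separate verification.

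Your last step identifies the hyperoperation via \eqref{eq:quotient product} and substitutes $x'=gx$, $y'=hy$. This is the same as the paper's appeal to shortness.
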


\begin{proof}
The $G$-orbit equivalence $\equiv_G$ is the union of the $\phi$-orbit equivalences $\equiv_\phi$ for each $\phi \in G$. Therefore, if $y\in x \star u$ for some $u\equiv_G e$ (where $e$ is the identity of $M$), then $u \equiv_{\phi} e$ for some $\phi \in G$. Since $\equiv_\phi$ is a (semi)mosaic equivalence by Proposition~\ref{prop:orbit equivalence}, we have $y\equiv_\phi x$, which implies that $y\equiv_G x$.

The quotient by $\equiv_G$ is the usual map $p \colon M \twoheadrightarrow M/G$ sending elements $m \in M$ to orbits $Gm$. The description of the hyperoperation $\ostar$ on $M/G$ above now follows from the fact that $p$ is a short morphism.
\end{proof}

\subsection{Strict epimorphisms}

Recall that a morphism of semimosaics $p \colon M \to N$ is \emph{strict} if it satisfies 
\[
p(x \star y) = p(x) \star p(y)
\]
for all $x,y \in M$.
It is easy to verify~\cite[Lemma~2.13]{NakamuraReyes} that every strict surjective morphism is also short and thus regular, so is isomorphic to a quotient by an effective congruence. In this section we characterize those effective congruences whose quotient is a strict morphism.

In order to characterize strict epimorphisms as quotients by relations, we recall the following notion from~\cite[Definition~3.10]{Jun:geometry}. Given an equivalence relation $\equiv$ on $M$ and subsets $X,Y \subseteq M$, we write 
\[
X \equiv Y \quad \iff \quad 
\begin{array}{l} 
    \mbox{for all } x \in X \mbox{ and } y \in Y, \\
    \mbox{there exist } x' \in X \mbox{ and } y' \in Y \\
    \mbox{such that } x \equiv y' \mbox{ and } y \equiv x'.
\end{array}
\]

For a congruence $R \subseteq M \times M$, we will alternately denote $xRy$ as $x \equiv_R y$ for ease of reading. This notation will translate to congruence of subsets (in the sense above) as $X \equiv_R Y$.

\begin{definition}
Let $M$ be a mosaic. A strict submosaic $L \subseteq M$ is \emph{normal} if it satisfies the following conditions for all $x, y \in M$:
\begin{enumerate}[label=(\roman*)]
    \item $x \star L = L \star x$;
    \item $(x \star L) \star (y \star L) \subseteq (x \star y) \star L$.
\end{enumerate}
\end{definition}

In the definition above, note that if $M$ is commutative then~(i) always holds, and if $M$ is associative then (i)$\implies$(ii). In particular, if $M$ is a group then the condition above is the same as the usual notion of a normal subgroup.\footnote{Note that this term is not intended to suggest any connection with \emph{normal morphisms}.}

\begin{lemma}\label{lem:normal quotient}
Let $L$ be a normal submosaic of a mosaic $M$. The cosets $L \star x = x \star L$ partition $M$.
The set $M/L$ of cosets forms a mosaic under the hyperoperation
\[
(x \star L) \ostar (y \star L) := \{z \star L \mid z \in x \star y\},
\]
with the quotient map $q \colon M \twoheadrightarrow M/L$ being a strict morphism.
\end{lemma}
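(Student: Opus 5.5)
The plan is to realize $M/L$ as the quotient of $M$ by a mosaic equivalence, so that Proposition~\ref{prop:congruences} and Corollary~\ref{cor:mosaic quotients} supply the mosaic structure. Then condition~(ii) of normality will be used to simplify the induced hyperoperation to the stated formula, which gives strictness at once.

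First, define $x \sim y$ if and only if $y \in x \star L$.
\begin{itemize}
\item Reflexivity holds because $e \in L$.
\item For symmetry, suppose $y \in x \star l$ with $l \in L$. Reversibility gives $x \in y \star l^{-1}$, and $l^{-1} \in L$ because $L$ is a submosaic.
\item For transitivity, take $y \in x \star L$ and $z \in y \star l'$ with $l' \in L$. Apply condition~(ii) with $y=e$: this gives $(x\star L)\star(e \star L) \subseteq (x \star e)\star L = x \star L$. Since $e \star L = L$, we get $z \in (x \star L)\star L \subseteq x \star L$.
\end{itemize}
So $\sim$ is an equivalence relation. The class of $x$ is exactly $x \star L$, which equals $L \star x$ by~(i). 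Hence the cosets partition $M$, and in particular $[e] = L$.

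Next, I check that $\sim$ is a mosaic equivalence in the sense of Definition~\ref{def:mosaic congruences}.
\begin{itemize}
\item If $y \in x \star [e] = x \star L$, then $y \sim x$ by definition.
\item If $y \in [e]\star x = L \star x$, then $y \in x \star L$ by~(i), so again $y \sim x$.
\item For inverses, suppose $y \in x \star l$. Reversibility gives $l \in x^{-1}\star y$, and applying reversibility again gives $x^{-1} \in l \star y^{-1} \subseteq L \star y^{-1} = y^{-1}\star L$. Hence $x^{-1}\sim y^{-1}$, which yields $[x^{-1}] = [x]^{-1}$.
\end{itemize}
By Proposition~\ref{prop:congruences} (equivalently Corollary~\ref{cor:mosaic quotients}), the set of cosets is then a mosaic. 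Its hyperoperation is $a \ostar b = q(q^{-1}(a)\star q^{-1}(b))$, and the quotient map $q$ is a short morphism.

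Finally, I identify this hyperoperation with the formula in the statement. Writing $a = x\star L$ and $b = y \star L$, the induced product is $\{z \star L \mid z \in (x\star L)\star(y \star L)\}$.
\begin{itemize}
\item The inclusion $\supseteq$ holds since $x \in x \star L$ and $y \in y \star L$.
\item For $\subseteq$, condition~(ii) gives $z \in (x \star y)\star L$. So $z \in w \star L$ for some $w \in x \star y$, and hence $z \star L = w \star L$.
\end{itemize}
Thus $(x\star L)\ostar(y \star L) = \{z\star L \mid z \in x \star y\}$. This is independent of the chosen representatives, because the left-hand side visibly is. It says precisely that $q(x \star y) = q(x)\ostar q(y)$, so $q$ is strict.

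The only delicate points are the transitivity step and this last inclusion. Both rest on condition~(ii), and I expect them to be the main thing to get right; everything else is direct reversibility bookkeeping.
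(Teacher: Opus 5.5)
Your proof is correct. It agrees with the paper on the coset relation (reflexivity, symmetry via reversibility, transitivity via condition~(ii) with $e$ in the second slot) and on the final strictness computation via~(ii). Where it differs is in how the mosaic structure on the coset space is obtained.

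The paper simply asserts that, for normal $L$, the coset relation coincides with the zigzag relation of Lemma~\ref{lem:cokernel}. This follows from~(i) together with transitivity, since each step $z_{i+1}\in z_i\star L\cup L\star z_i$ is then a coset step. The paper then reads off from that lemma that $q$ is the cokernel of $L\hookrightarrow M$ in $\Msc$. The mosaic structure and the formula $a\ostar b=q(q^{-1}(a)\star q^{-1}(b))$ come for free, with no inverse condition to check.

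You instead verify directly that the coset relation is a mosaic equivalence in the sense of Definition~\ref{def:mosaic congruences}, and then invoke Proposition~\ref{prop:congruences} and Corollary~\ref{cor:mosaic quotients}. The extra work is your double application of reversibility combined with~(i) to get $x^{-1}\in y^{-1}\star L$. That step also shows the relation is closed under inversion in $M\times M$, so it really is a congruence, as Proposition~\ref{prop:congruences} requires.

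What each route buys:
\begin{itemize}
\item Your route stays entirely within the effective-congruence machinery of Section~\ref{sec:congruences} and avoids relying on the externally cited cokernel description. It exhibits $q$ as a regular epimorphism, which matches the setup of Proposition~\ref{prop:strict epi}.
\item The paper's route additionally shows that $q$ is a normal epimorphism, namely the cokernel of $L$. That fact is quoted in the proof of Proposition~\ref{prop:strict epi}(3). If you wanted it as well, the one-line comparison of the two relations via~(i) would supply it.
\end{itemize}
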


\begin{proof}
We will show that the relation defined on $M$ by 
\[
x \equiv y \iff x \in y \star L
\]
is an equivalence relation. It is reflexive because $e_M \in L$, and it is symmetric by reversibility. To see transitivity, suppose that that $x,y,z \in M$ are such that $x \in y \star L$ and $y \in z \star L$. Then
\[
x \in y \star L \subseteq (z \star L) \star (e \star L) \subseteq (z \star e) \star L = z \star L.
\]
This shows that ``belonging to the same coset'' is an equivalence relation, so that the cosets partition $M$ as desired.

Thus for normal $L$, the equivalence relation $\equiv$ above coincides with the one described in Lemma~\ref{lem:cokernel}, so its quotient is also described by that result. It follows that $q$ is a morphism of mosaics, where $M/L$ is equipped with the hyperoperation described in the statement of Lemma~\ref{lem:cokernel}. To see that $q$ is strict, we compute
\begin{align*}
q(x) \ostar q(y) &= (x \star L) \ostar (y \star L) \\ &= \{z \star L \mid z \in (x \star L) \star (y \star L) \} \\
&\subseteq \{z \star L \mid z \in (x \star y) \star L \} \\
&=\{z \star L \mid z \in x \star y\}\\
&= q(x \star y),
\end{align*}
where the penultimate equality holds because $\equiv$ above is an equivalence relation. Thus $q(x \star y) = q(x) \ostar q(y)$, showing that $q$ is a strict morphism.
The hyperoperation on $M/L$ can thus be written as
\[
(x \star L) \ostar (y \star L) = q(x) \ostar q(y) = q(x \star y) = \{z \star L \mid z \in x \star y\}
\]
as claimed.
\end{proof}

\begin{proposition}\label{prop:strict epi}
Let $R$ be an effective congruence on a (semi)mosaic $M$, and let $p \colon M \to M/R$ denote the canonical surjection. The following are equivalent:
\begin{enumerate}
\item The quotient morphism $p$ is strict.
\item For all $x_1, x_2, y_1, y_2 \in M$, if $x_i \equiv_R y_i$ for $i = 1,2$ then $(x_1 \star x_2) \equiv_R (y_1 \star y_2)$.
\end{enumerate}
If $M$ is a mosaic, then the above are further equivalent to:
\begin{enumerate}[resume]
\item $L = \ker p$ is normal in $M$, $R$ is the equivalence relation that partitions $M$ into cosets of $L$, and $p$ is isomorphic to the quotient $M \twoheadrightarrow M/L$ described in Lemma~\ref{lem:normal quotient}.
\end{enumerate}
\end{proposition}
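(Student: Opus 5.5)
We first prove (1)$\iff$(2) directly from the description of the quotient. By Theorem~\ref{thm:effective congruence} and Proposition~\ref{prop:congruences}, $M/R$ is the set-theoretic quotient with the short hyperoperation
\[
[x]\ostar[y]=p(p^{-1}[x]\star p^{-1}[y])=\{[z] : z\in x'\star y',\ x'\equiv_R x,\ y'\equiv_R y\}.
\]
Hence $p(x\star y)\subseteq p(x)\ostar p(y)$ always holds, and strictness of $p$ says exactly that $p(x\star y)=p(x')\ostar p(y')$ whenever $x\equiv_R x'$ and $y\equiv_R y'$.

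For (1)$\Rightarrow$(2), suppose $p$ is strict and $x_i\equiv_R y_i$. Then
\[
p(x_1\star x_2)=p(x_1)\ostar p(x_2)=p(y_1)\ostar p(y_2)=p(y_1\star y_2),
\]
and equality of these images of subsets is precisely the relation $(x_1\star x_2)\equiv_R(y_1\star y_2)$.

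For (2)$\Rightarrow$(1), take $[z]\in p(x)\ostar p(y)$, so that $z\in x'\star y'$ for some $x'\equiv_R x$ and $y'\equiv_R y$. Condition (2) gives $x'\star y'\equiv_R x\star y$, so some $w\in x\star y$ satisfies $w\equiv_R z$. Therefore $[z]\in p(x\star y)$, which gives the reverse inclusion.

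For the mosaic case, (3)$\Rightarrow$(1) is immediate from Lemma~\ref{lem:normal quotient}. The substantive step is (1)$\Rightarrow$(3). Let $L=\ker p=[e]$. This is a strict submosaic: it is the kernel of $p$, hence a normal mono, hence strict by the characterization recalled earlier. The plan is to show $[x]=x\star L=L\star x$ for every $x$, and then verify the two normality conditions.

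To show $x\star L\subseteq[x]$, apply the mosaic-equivalence condition of Definition~\ref{def:mosaic congruences}: if $y\in x\star[e]$, then $[y]=[x]$. Similarly $L\star x\subseteq[x]$.

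The reverse inclusion $[x]\subseteq x\star L$ is where strictness is needed, and I expect it to be the main obstacle. Let $y\equiv_R x$. Then $[e]=[x^{-1}][x]$-wise we have $p(x^{-1})=p(y^{-1})$, and strictness gives
\[
p(x^{-1}\star y)=p(x^{-1})\ostar p(y)=p(x^{-1})\ostar p(x)=p(x^{-1}\star x)\ni[e].
\]
So there is some $l\in x^{-1}\star y$ with $l\in L$. By reversibility, $y\in x\star l\subseteq x\star L$. The symmetric computation with $y\star x^{-1}$ gives $y\in L\star x$.

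Hence $[x]=x\star L=L\star x$, which is condition (i) of normality, and $R$ is the coset partition. For condition (ii), take $z\in(x\star L)\star(y\star L)$, so $z\in x'\star y'$ with $x'\in[x]$ and $y'\in[y]$. By (2), $z\equiv_R w$ for some $w\in x\star y$, so $z\in w\star L\subseteq(x\star y)\star L$.

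Finally, $p$ and the coset quotient $M\twoheadrightarrow M/L$ of Lemma~\ref{lem:normal quotient} are quotients by the same effective congruence. By the bijection of Corollary~\ref{cor:mosaic quotients}, or simply because both are short with the same underlying partition, they are isomorphic.
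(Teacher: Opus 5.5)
Your proposal is correct and follows essentially the same route as the paper: (1)$\Leftrightarrow$(2) comes from unwinding the short hyperoperation on $M/R$. For (1)$\Rightarrow$(3), the key step is the same as the paper's: strictness gives $[e]\in p(x^{-1}\star y)$, and reversibility then places $y$ in $x\star L$. The paper does the mirror-image computation with $y^{-1}\star x$, and it obtains normality condition (ii) by applying $p$ directly to $(x\star L)\star(y\star L)$, whereas you invoke (2). The only blemish is the garbled phrase asserting $p(x^{-1})=p(y^{-1})$; it is harmless, since your computation only uses $p(y)=p(x)$.
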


\begin{proof}
(1)$\implies$(2): Assume $p$ is strict, and suppose $x_i, y_i \in M$ satisfy $x_i \equiv_R y_i$ for $i = 1,2$. Given $z_1 \in x_1 \star y_1$, we have 
\[
p(z_1) \in p(x_1 \star y_1) = p(x_1) \star p(y_1) = p(x_2) \star p(y_2) = p(x_2 \star y_2).
\]
This means that there exists $z_1' \in x_2 \star y_2$ such that $p(z_1) = p(z_1')$, which is equivalent to $z_1 \equiv_R z_1'$. Similar reasoning shows that for each $z_2 \in x_2 \star y_2$ there exists $z_2' \in x_1 \star y_1$ such that $z_2 \equiv_R z_2'$. So~(2) holds.

(2)$\implies$(1): Assume~(2) holds, and let $x,y \in M$. To show that $p$ is strict, it suffices to prove $p(p^{-1}(x) \star p^{-1}(y)) \subseteq p(x \star y)$. Let $z \in p^{-1}(x) \star p^{-1}(y)$; we will show $p(z) \in p(x \star y)$. This means there exist $x',y' \in M$ with $p(x) = p(x')$ and $p(y) = p(y')$ such that $z \in x' \star y'$. Because $p$ is the quotient map, this means $x \equiv_R x'$ and $y \equiv_R y'$. Since $x \star y \equiv_R x'\star y'$, there exists $w \in x \star y$ such that $z \equiv_R w$. This means that $p(z) = p(w) \in p(x \star y)$ as desired.

For the remainder of this proof, assume that $M$ is a mosaic. We have (3)$\implies$(1) by Lemma~\ref{lem:normal quotient}. Conversely, assume that~(1) holds. 
Let $L = \ker p$, which, it is easy to see, is a strict submosaic of $M$. We first prove that for $x,y \in M$,
\[
p(x) = p(y) \iff x \in y \star L.
\]
If $x \in y \star L$ then it is straightforward to check that $p(x) = p(y)$. Conversely, suppose that $p(x) = p(y)$. Then strictness of $p$ implies that
\[
e \in p(y)^{-1} \star p(x) = p(y^{-1} \star x).
\]
Thus there exists $z \in (y^{-1} \star x) \cap \ker p$. By reversibility, we have 
\[
x \in y \star z \subseteq y \star L
\]
as desired.

This means that the equivalence classes of $R$ are exactly the left cosets of $L$. But a symmetric argument also shows that these equivalence classes are equal to the right cosets as well. Thus for all $x \in M$ we have
\[
x \star L = [x]_R = L \star x,
\]
verifying condition~(i) of normality. To verify condition~(ii), fix $x,y \in M$. We have
\[
p((x \star L) \star (y \star L)) = p(x \star L) \star p(y \star L) = p(x) \star p(y) = p(x \star y).
\]
This implies that $(x \star L) \star (y \star L) \subseteq p^{-1}(p(x \star y)) = (x \star y) \star L$, proving~(ii). So $L$ is normal in $M$, and as explained in the proof of Lemma~\ref{lem:normal quotient}, the cokernel of the inclusion $L \hookrightarrow M$ is given by the coset quotient space $M/L$.
\end{proof}

For a general mosaic $M$ and strict submosaic $L \subseteq M$, the quotient $M \twoheadrightarrow M/L$ will not be strict if $L$ is not normal. (Any non-normal subgroup of a group yields an explicit example, such as $M=S_3$ and $L=\{e, (12)\}$). 
On the other hand, if $M$ is commutative and associative---that is, a canonical hypergroup---then any strict subhypergroup $L \subseteq M$ is automatically normal, so the quotient map $M \twoheadrightarrow M/L$ is strict. This was essentially observed in~\cite[Corollary~3.17]{Jun:geometry}.

\separate

Condition~(3) of
Proposition~\ref{prop:strict epi} characterizes strict epimorphisms whose domain is a \emph{mosaic} purely in terms of their kernel, but there is no similar characterization for \emph{semi}mosaics. 
Notice that every surjective monoid homomorphism is a strict epimorphism of semimosaics, but it is well known that such morphisms are not all cokernels. (For example, consider the Boolean monoid $\mathbb{B} = \{0,1\}$, where $1 + 1 = 1$, and the homomorphism $\mathbb{N}\rightarrow\mathbb{B}=\{0, 1\}$ given by $n\mapsto 1$ if $n>0$.) 
Thus we cannot characterize strict epimorphisms of semimosiacs purely in terms of their kernel.

\subsection{Quotients of some small groups}

We now describe a method to characterize all possible (semi)mosaic congruences by separating the kernel from the rest of the congruence.

\begin{theorem}\label{thm:pairs}
Let $M$ be a semimosaic. The semimosaic equivalences on $M$ are in bijective correspondence with pairs of the form $(L, \equiv)$ where $L \subseteq M$ is an absorptive subsemimosaic and $\equiv$ is an equivalence relation on $(M / L) \setminus \{[e]_L\}$, where $M/L$ is as described in Lemma~\ref{lem:cokernel}.

If $M$ is a mosaic, then such a pair corresponds to a mosaic equivalence if and only if and $\equiv$ preserves inverses (in the sense that $x \equiv y \implies x^{-1} \equiv y^{-1}$ for all $x,y \in M$).
\end{theorem}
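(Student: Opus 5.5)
The plan is to set up the correspondence explicitly in both directions and check that the two constructions are mutually inverse. Here ``absorptive subsemimosaic'' is read as a strict absorptive subsemimosaic, so that $M/L$ makes sense via Lemma~\ref{lem:cokernel}. Throughout, write $\equiv_L$ for the equivalence relation of Lemma~\ref{lem:cokernel} and $[x]_L$ for its classes.

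\emph{From equivalences to pairs.} Given a semimosaic equivalence $R$ on $M$, set $L := [e]_R$.
\begin{itemize}
\item $L$ is strict: if $x,y \in L$ and $z \in x \star y$, then $z \in x \star [e]_R$. The defining condition of Definition~\ref{def:mosaic congruences} then gives $[z]_R = [x]_R = [e]_R$.
\item $L$ is absorptive: if $a \in x \star b$ or $a \in b \star x$ with $a,b \in L$, the same condition gives $[x]_R = [a]_R = L$, so $x \in L$.
\item $L$ contains $e$, so $L$ is a strict absorptive subsemimosaic.
\item $\equiv_L \subseteq R$: each generating step $z_{i+1} \in z_i \star L \cup L \star z_i$ forces $z_{i+1} \mathrel{R} z_i$, and $R$ is symmetric and transitive.
\item $[e]_L = L$: the inclusion $[e]_L \subseteq [e]_R = L$ follows from the previous point. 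Conversely, for $l \in L$ we have $l \in e \star l$, so $l \equiv_L e$.
\end{itemize}
Hence $R$ descends to an equivalence relation on $M/L$ having $\{[e]_L\}$ as one of its classes. Restricting it to $(M/L) \setminus \{[e]_L\}$ gives the desired relation $\equiv$.

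\emph{From pairs to equivalences.} Given $(L,\equiv)$, define $x \mathrel{R} y$ iff either $x,y \in L$, or $x,y \notin L$ and $[x]_L \equiv [y]_L$. This is an equivalence relation with $[e]_R = L$, because $[e]_L = L$ by the argument above. To check the semimosaic equivalence condition, suppose $y \in x \star L$ (the case $y \in L \star x$ is symmetric).
\begin{itemize}
\item If $x \in L$, then $y \in L$ by strictness.
\item If $x \notin L$, then $y \equiv_L x$ by definition of $\equiv_L$, and absorptivity prevents $y \in L$. So $y,x \notin L$ and $[y]_L = [x]_L$, hence $x \mathrel{R} y$.
\end{itemize}

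\emph{Bijectivity.} Starting from $(L,\equiv)$, we get $[e]_R = L$, and $R$ induces $\equiv$ on the nontrivial classes, so we recover the pair. Starting from $R$, two elements outside $L$ are $R$-related iff their $\equiv_L$-classes are $R$-related, since $\equiv_L \subseteq R$. Elements of $L$ are exactly $[e]_R$. So we recover $R$.

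\emph{Mosaic case.} Suppose $M$ is a mosaic. First, $L$ is automatically closed under inverses: for $x \in L$ we have $e \in x \star x^{-1}$, and absorptivity (with $a = e$, $b = x$) gives $x^{-1} \in L$. Thus $[e]_R^{-1} = [e]_R$. The remaining requirement $[x^{-1}]_R = [x]_R^{-1}$ is then precisely the statement that the relation $R$ assembled from $(L,\equiv)$ preserves inverses, which is how the condition in the statement is read.

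The step I expect to be the main obstacle is this last translation. One cannot simply ask that $\equiv$ be compatible with inversion on $M/L$, because $\equiv_L$ itself need not respect inverses in a nonassociative mosaic. Reversibility turns $y \in x \star l$ into $x \in y \star l^{-1}$ rather than into a relation between $x^{-1}$ and $y^{-1}$. I would therefore state the inverse condition for elements $x,y$ of $M$ via the combined relation $R$, and verify directly that it is equivalent to $R$ being a mosaic equivalence.
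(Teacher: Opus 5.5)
Your argument is correct, and it takes a more elementary route than the paper. The paper works with morphisms. Via Corollary~\ref{cor:mosaic quotients} it replaces semimosaic equivalences by regular epimorphisms $q\colon M\twoheadrightarrow N$. It then factors $q=p_2\circ p_1$ through the cokernel $p_1\colon M\twoheadrightarrow M/L$ of $L=\ker q$ and notes that $p_2$ is again a regular epimorphism with trivial kernel. For the converse it composes the two quotients, using that regular epimorphisms compose.

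You carry out the same decomposition at the level of relations: $[e]_R$ is $\ker q$, and the relation that $R$ induces on $M/L$ is the kernel-pair relation of $p_2$. In doing so you verify by hand several points the paper passes over quickly:
\begin{itemize}
\item the relation assembled from an arbitrary pair is a semimosaic equivalence (the paper tacitly uses that any equivalence relation on $M/L$ with singleton identity class is one);
\item the kernel is recovered;
\item the two assignments are mutually inverse.
\end{itemize}
The paper's version is shorter and exhibits the quotient as $M\twoheadrightarrow M/L\twoheadrightarrow (M/L)/\!\!\equiv$, which is how the later examples are computed; yours is self-contained.

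One small repair is needed. In the direction from pairs to equivalences you justify $[e]_L=L$ ``by the argument above''. That argument presupposed a semimosaic equivalence $R$ with $[e]_R=L$, which is what you are in the middle of constructing, and your definition of $R$ already needs $[e]_L\subseteq L$. Argue directly instead: a chain as in Lemma~\ref{lem:cokernel} starting at $e$ never leaves $L$, since forward steps stay in $L$ by strictness and backward steps stay in $L$ by absorptivity.

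Your remark about the mosaic case is mistaken, although the proof does not rely on it: $\equiv_L$ always respects inverses, associative or not. Three applications of reversibility give
\[
z\in x\star y \implies x\in z\star y^{-1} \implies y^{-1}\in z^{-1}\star x \implies z^{-1}\in y^{-1}\star x^{-1}.
\]
So a generating step $y\in x\star l$ (resp.\ $y\in l\star x$) with $l\in L$ yields $y^{-1}\in l^{-1}\star x^{-1}$ (resp.\ $y^{-1}\in x^{-1}\star l^{-1}$) with $l^{-1}\in L$. Alternatively, $M\twoheadrightarrow M/L$ is a regular epimorphism in $\Msc$, so its kernel pair, whose underlying relation is $\equiv_L$, is a mosaic equivalence by Theorem~\ref{thm:effective congruence}.

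Hence $M/L$ is a mosaic with $[x]_L^{-1}=[x^{-1}]_L$, and the inverse condition can be stated intrinsically on $M/L$. This is what the paper does, by requiring $\equiv_q$ to be a mosaic equivalence on $M/L$. Since $L$ is closed under inverses, this is equivalent to your requirement that the assembled relation $R$ preserve inverses. So the obstacle you anticipate does not arise, and under either reading the mosaic clause follows immediately from the definition of a mosaic equivalence.
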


\begin{proof}
Fix a regular epimorphism $q \colon M \twoheadrightarrow N$, and denote $L = \ker q$. Then $q$ factors through the cokernel of the inclusion $L \subseteq M$ as
\[
\begin{tikzcd}
M \ar[rr, twoheadrightarrow, "q"] \ar[dr, twoheadrightarrow, "p_1"] & & N. \\
& M/L \ar[ur, twoheadrightarrow, "p_2"] & 
\end{tikzcd}
\]
As $L = \ker q$, it must be the case that $\ker p_2$ is trivial.

Since $q = p_2 \circ p_1$ is a regular epimorphism, it follows directly that $p_2$ is still a regular epimorphism. Thus $p_2$ is isomorphic to a quotient $M/L \twoheadrightarrow (M/L)/\equiv_q$ by some semimosaic equivalence. Because $\ker q = L$, the equivalence under $\equiv_q$ class for identity $[e]_L$ is a singleton. This means that that $\equiv_q$ restricts to an equivalence relation on $(M/L) \setminus\{L\}$.

Conversely, consider a pair of an absorptive submosaic $L \subseteq M$ and an equivalence relation $\equiv$ on $M/L$ with trivial kernel. The quotients $p_1 \colon M \to M/L$ and $p_2 \colon M/L \to (M/L)/\equiv$ compose to a regular epimorphism $q = p_2 p_1$ as in the diagram above. This yields an inverse to the assignment $q \mapsto (\ker q, \equiv_q)$, establishing the bijection. 

The case where $M$ and $N$ are mosaics and $q$ is a morphism of mosaics corresponds to the case where $L = \ker q$ is a strict submosaic and $\equiv_q$ is a mosaic equivalence.
\end{proof}

If $M = G$ is a group, the characterization of equivalences above simplifies to the following. The submosaics $L \subseteq G$ are simply the subgroups of $G$. By Lemma~\ref{lem:cokernel}, the cokernel of the inclusion $L \hookrightarrow G$ in $\Msc$ is the double coset hypergroup $G \modmod L$ with hyperoperation
\[
(LxL) \ostar (LyL) = \{LzL \mid z \in xLy\}.
\]
This is isomorphic to the quotient group in case $L$ is normal. 

We close with a few examples of small groups to illustrate the number of mosaic equivalences $G$, corresponding to mosaic quotients of $G$ as in~\eqref{eq:quotient correspondence}. For each group $G$, it suffices by Theorem~\ref{thm:pairs} to find the pairs $(L, \equiv)$ where $L \leq G$ is a subgroup and $\equiv$ is an equivalence relation on $G\modmod L \setminus \{L\}$ that preserve inversion. If $L$ is a normal subgroup (e.g., if $G$ is abelian) then $G \modmod L = G/L$ is the ordinary quotient group.

\begin{example}
    Let $G=\mathbb{Z}/3\mathbb{Z}$, where the operation is written additively. 
    Note that $G$ has only two subgroups: $0$ or $G$. If $L=G$, then $G/L$ is the trivial mosaic and $\equiv$ is empty. Othewise $L=0$, and $\equiv$ is a relation on $\{\pm 1\}\subset G$. Thus $S$ is either the diagonal relation or the universal relation. The former gives the group $G=\mathbb{Z}/3\mathbb{Z}$ and the latter gives the Krasner hypergroup $\K$. In summary, the quotient mosaics of $\mathbb{Z}/3\mathbb{Z}$ are $\mathbb{Z}/3\mathbb{Z}$, $\K$ and $0$.
\end{example}

\begin{example}
    Let $G=\mathbb{Z}/5\mathbb{Z}$, with the operation is written additively again. 
    Note that $G$ has only two subgroups: $0$ or $G$. If $L=G$, then $G/L$ is the trivial mosaic and $\equiv$ is empty. Othewise $L=0$, and $\equiv$ is a relation on $T:=\{\pm 1, \pm2\}\subset G$. There are six possibilities for $S$:
\begin{enumerate}
    \item $\Delta_T$ 
    \item $\Delta_T\cup\{(1, -1), (-1, 1)\}$
    \item $\Delta_T\cup\{(2, -2), (-2, 2)\}$ 
    \item $\Delta_T\cup\{(1, 2), (2, 1), (-1, -2), (-2, -1)\}$
    \item $\Delta_T\cup\{(1, -2), (-2, 1), (-1, 2), (2, -1)\}$
    \item $T\times T$
\end{enumerate}
where $\Delta_T$ is the diagonal relation. The equivalence~$(1)$ gives the group $G$ and~$(6)$ gives the Krasner hypergroup $\K$. The folloing are lists of elements of $G/R$ and operation tables for the cases (2)--(5): 
\begin{itemize}
\item[(2)] $G/R=\{0, 1, \pm 2\}$ 
\begin{center}
\begin{tabular}{ |c|c|c|c|c| } 
\hline
+ & $0$ & $1$ & $2$ & $-2$ \\
\hline
$0$ & $0$ & $1$ & $2$ & $-2$ \\ 
\hline
$1$ & $1$ & $\{0, 2, -2\}$ & $\{1, -2\}$
& $\{1, 2\}$\\ 
\hline
$2$ & $2$ & $\{1, -2\}$ & $1$
& $0$\\ 
\hline
$-2$ & $-2$ & $\{1, 2\}$ & $0$
& $1$\\ 
\hline
\end{tabular}
\end{center}
Notice that $1+(2+2) \neq (1+2)+2$, so this is not a hypergroup.
\item[(3)] $G/R=\{0, \pm1, 2\}$ \begin{center}
\begin{tabular}{ |c|c|c|c|c| } 
\hline
+ & $0$ & $1$ & $-1$ & $2$ \\
\hline
$0$ & $0$ & $1$ & $-1$ & $2$ \\ 
\hline
$1$ & $1$ & $2$ & $0$
& $\{-1, 2\}$\\ 
\hline
$-1$ & $-1$ & $0$ & $2$
& $\{1, 2\}$\\ 
\hline
$2$ & $2$ & $\{-1, 2\}$ & $\{1, 2\}$
& $\{0, 1, -1\}$\\ 
\hline
\end{tabular}
\end{center}   
The quotients $G/R$ in~(2) and~(3) are isomorphic by the homomorphism $1\mapsto 2, 2\mapsto 1, -2\mapsto -1$.
\item[(4)] $G/R=\{0, \pm1\}$ 
\begin{center}
\begin{tabular}{ |c|c|c|c|c| } 
\hline
+ & $0$ & $1$ & $-1$ \\
\hline
$0$ & $0$ & $1$ & $-1$\\ 
\hline
$1$ & $1$ & $\{1, -1\}$ & $\{0, 1, -1\}$
\\ 
\hline
$-1$ & $-1$ & $\{0, 1, -1\}$ & $\{1, -1\}$
\\ 
\hline
\end{tabular}
\end{center} 
This is a hypergroup, isomorphic to $H_{3, 3}$ of~\cite[\S 7.1]{Zieschang}.
\item[(5)] $G/R=\{0, \pm1\}$ 
\begin{center}
\begin{tabular}{ |c|c|c|c|c| } 
\hline
+ & $0$ & $1$ & $-1$ \\
\hline
$0$ & $0$ & $1$ & $-1$\\ 
\hline
$1$ & $1$ & $\{1, -1\}$ & $\{0, 1, -1\}$
\\ 
\hline
$-1$ & $-1$ & $\{0, 1, -1\}$ & $\{-1, 1\}$
\\ 
\hline
\end{tabular}
\end{center}  
Once again, the quotients $G/R$ defined in~(4) and~(5) are isomorphic. 
\end{itemize}
In summary, the quotient mosaics of $\mathbb{Z}/5\mathbb{Z}$ are $\mathbb{Z}/5\mathbb{Z}, \{0, \pm 1, 2\}$ described in (3), $\{0, \pm1\}$ described in (4), $\K$ and $0$.
\end{example}

\begin{example}
    Let $G=S_3$. Then the subgroups of $G$ are
$$\{e\}, \{e, (12)\}, \{e, (13)\}, \{e, (23)\}, \{e, (123), (132)\}, S_3.$$
If $L=S_3$, then $G/L$ is the trivial group. If $L=\{e, (123), (132)\}$, then $G/L=G/H$ since $H\subset G$ is a normal subgroup.
Since $G/H\setminus\{[e]_H\}$ is a singleton, it has a unique relation, namely the trivial relation. This gives the cyclic group $\mathbb{Z}/2\mathbb{Z}$. 

Now let $L=\{e, (12)\}$. The corresponding double coset space 
$G \modmod L = \{L, L(13)L\}$ has order two, so that $G \modmod L \setminus \{L\}$ is a singleton with only the trivial congruence.
In this case $G \modmod L$ is isomorphic to the Krasner hypergroup $\K$. The symmetric argument shows that the quotient by either $L = \{e, (23)\}$ or $L = \{e, (13)\}$ gives the Krasner hypergroup.

Finally, let $L=\{e\}$. In this case we must consider inverse-preserving equivalence relations on $$T=G\setminus\{e\}=\{(12), (13), (23), (123), (132)\}.$$ 
Consider the possibilities of the (inverse-stable) quotient sets. We list them as follows:
\begin{enumerate}
    \item If no two distinct elements are identified, then the quotient set is $T$. This gives the group $G=S_3$.
    \item Identifying two of the three transpositions produces the following three quotient sets consisting of four elements:
\begin{gather*}
\{[(12), (13)], [(23)], [(123)], [(132)]\}, \quad
    \{[(12), (23)], [(13)], [(123)], [(132)]\}, \\
    \{[(23), (12)], [(23)], [(123)], [(132)]\}.
\end{gather*}
The first quotient gives the following:
\begin{center}
\begin{tabular}{ |c|c|c|c|c|c| } 
\hline
$\star$ & $e$ & $a=[12]$ & $b=[23]$ & $c=[123]$ & $d=[132]$ \\
\hline
$e$ & $e$ & $a$ & $b$ & $c$ & $d$ \\ 
\hline
$[12]$ & $a$ & $\{e, c, d\}$ & $\{c, d\}$
& $\{a, b\}$ & $\{a, b\}$\\ 
\hline
$[23]$ & $b$ & $\{c, d\}$ & $e$
& $a$ & $a$\\ 
\hline
$[123]$ & $c$ & $\{a, b\}$ & $a$
& $d$ & $e$\\ 
\hline
$[132]$ & $d$ & $\{a, b\}$ & $a$
& $e$ & $c$\\ 
\hline
\end{tabular}
\end{center} 
This is not a hypergroup: $(b\star c)\star d=a\star d=\{a, b\}$ whereas $b\star (c\star d)=b\star e=b$.

\item Identifying the three transpositions gives the following set with three elements:
$$\{[(12), (13), (23)], [(123)], [(132)]\}.$$  
\begin{center}
\begin{tabular}{ |c|c|c|c|c| } 
\hline
$\star$ & $e$ & $a=[12]$ & $b=[123]$ & $c=[132]$ \\
\hline
$e$ & $e$ & $a$ & $b$ & $c$ \\ 
\hline
$[12]$ & $a$ & $\{e, b, c\}$ & $a$
& $a$\\ 
\hline
$[123]$ & $b$ & $a$ & $c$
& $e$\\ 
\hline
$[132]$ & $c$ & $a$ & $e$
& $b$\\ 
\hline
\end{tabular}
\end{center}   
This is a hypergroup, isomorphic to $H_{4, 2}$ of~\cite[\S 7.2]{Zieschang}.

\item Identifying the two 3-cycles gives the following set with four elements:
$$\{[(12)], [(13)], [(23)], [(123), (132)]\}.$$  
\begin{center}
\begin{tabular}{ |c|c|c|c|c|c| } 
\hline
$\star$ & $e$ & $a=[12]$ & $b=[13]$ & $c=[23]$ & $d=[123]$ \\
\hline
$e$ & $e$ & $a$ & $b$ & $c$ & $d$ \\ 
\hline
$[12]$ & $a$ & $e$ & $d$
& $d$ & $\{b, c\}$\\ 
\hline
$[13]$ & $b$ & $d$ & $e$
& $d$ & $\{a, c\}$\\ 
\hline
$[23]$ & $c$ & $d$ & $d$
& $e$ & $\{a, b\}$\\ 
\hline
$[123]$ & $d$ & $\{b, c\}$ & $\{a, c\}$
& $\{a, b\}$ & $\{e, d\}$\\ 
\hline
\end{tabular}
\end{center} 
This is not a hypergroup: $(a\star a)\star b=b$ wheras
$a\star (a\star b)=a\star d=\{b, c\}$.

\item Identifying one of the three transpositions and the 3-cycles produces the following three quotient sets consisting of three elements:
\begin{gather*}
\{[(12), (123), (132)], [(13)], [(23)]\},
    \quad \{[(13), (123), (132)], [(12)], [(23)]\}, \\
    \{[(23), (123), (132)], [(12)], [(13)]\}.
\end{gather*}
Note that $(123)$ and $(132)$ must be in the same equivalence class since they are inverse to each other(for example, $(12)\sim (123)$ implies $(12)=(12)^{-1}\sim (123)^{-1}=(132)$). The first quotient set gives the following:
\begin{center}
\begin{tabular}{ |c|c|c|c|c| } 
\hline
$\star$ & $e$ & $a=[12]$ & $b=[13]$ & $c=[23]$ \\
\hline
$e$ & $e$ & $a$ & $b$ & $c$ \\ 
\hline
$[12]$ & $a$ & $\{e, b, c\}$ & $\{a, c\}$
& $\{a, b\}$\\ 
\hline
$[13]$ & $b$ & $\{a, c\}$ & $e$
& $a$\\ 
\hline
$[23]$ & $c$ & $\{a, b\}$ & $a$
& $e$\\ 
\hline
\end{tabular}
\end{center} 
This is not a hypergroup: $b\star (b\star c)=b\star a=\{a, c\}$ whereas $(b\star b)\star c=e\star c=c$.

\item Identifying two of the three transpositions and the 3-cycles produces the following three quotient sets consisting of two elements:
\begin{gather*}
\{[(12), (13), (123), (132)], [(23)]\},
    \quad \{[(12), (23), (123), (132)], [(13)]\}, \\
    \{[(13), (23), (123), (132)], [(12)]\}.
\end{gather*}
\begin{center}
\begin{tabular}{ |c|c|c|c|c| } 
\hline
$\star$ & $e$ & $a=[12]$ & $b=[23]$ \\
\hline
$e$ & $e$ & $a$ & $b$\\ 
\hline
$[12]$ & $a$ & $\{e, a, b\}$ & $a$
\\ 
\hline
$[23]$ & $b$ & $a$ & $e$
\\ 
\hline
\end{tabular}
\end{center}  
This structure is a hypergroup, isomorphic to $H_{3,8}$ of~\cite[\S 7.1]{Zieschang}.
\item Identifying all the elements gives the singleton ${[T]}$. In this case, the quotient is a hypergroup isomorphic to $\K$.
\end{enumerate}
\end{example}

We note in the above examples that if $G$ is a group, there are mosaic quotients $G/\!\!\equiv$ that are non-associative mosaics, and thus which are not hypergroups. The only property of a mosaic of the form $G/\!\!\equiv$ that we are currently able to deduce for certain is that its hyperoperation is total. Thus we end with the following question.

\begin{question}
    Which total mosaics can be represented as regular epimorphic images of a group? In particular, can every hypergroup be represented as a regular epimorphic image of a group?
\end{question}

\bibliographystyle{amsplain}
\bibliography{congruences-v1}

\end{document}